\newcommand{\fif}{if and only if}
\newcommand{\bdl}{band-limited}
\newtheorem{tm}{Theorem}[section]
\newtheorem{lemma}[tm]{Lemma}
\newtheorem{cor}[tm]{Corollary}
 \newtheorem{definition}[tm]{Definition}
 \newtheorem{rem}[tm]{Remark}
\newcommand{\beqa}{\begin{eqnarray*}}
\newcommand{\eeqa}{\end{eqnarray*}}
\DeclareMathOperator*{\supp}{supp}
\newcommand{\field}[1]{\mathbb{#1}}
\newcommand{\bR}{\field{R}}        
\newcommand{\bC}{\field{C}}        
 \def\cF{\mathcal{F}}              
 \def\cH{\mathcal{H}}
 \def\cG{\mathcal{G}}
\def\rd{\bR^d}
\def\lrd{L^2(\rd)}
\def\intrd{\int_{\rd}}
\def\<{\left<}
\def\>{\right>}
\def\inv{^{-1}}
\def\mv1{M_v^1}
\newcommand{\ip}[2]{\ensuremath{\left<#1,#2\right>}}
\newcommand{\kl}{k_\lambda}
\newcommand{\mC}{\mathcal{C}}
\newcommand{\Gc}{\mathcal{G}}
\newcommand{\rkhs}{reproducing kernel Hilbert space}
\providecommand {\norm}[1] {\lVert#1\rVert}
\providecommand {\abs}[1] {\lvert#1\rvert}
\providecommand {\bigabs}[1] {\Bigl\lvert#1\Bigr\rvert}
\providecommand {\inprod}[1]{\langle #1 \rangle}
\providecommand {\biginprod}[1]{\Big\langle #1 \Big\rangle}
\providecommand {\set}[1]{\lbrace #1 \rbrace}
\providecommand {\inv}[1]{{#1}^{-1}}
\providecommand {\ind}{\mathbf{1}}
\begin{document}
\begin{abstract}
We derive necessary density conditions for sampling and for
interpolation in general \rkhs s satisfying some natural conditions on
the geometry of the space and the reproducing kernel. If the volume of
shells is small compared to the volume of balls (weak annular decay
property) and if the kernel possesses some off-diagonal decay or even  some
weaker form of  localization, then there exists a critical density $D$
with the following property: a set of sampling has density $\geq D$,
whereas a set of interpolation has density $\leq D$. The main  theorem
unifies many known density theorems in signal processing, complex
analysis, and harmonic analysis. For the special case of
bandlimited function we recover Landau's fundamental density
result. In complex analysis we rederive a critical density for
generalized Fock spaces. In harmonic analysis we obtain the first general
result about the density of coherent frames.
\end{abstract}

\title[Density in Reproducing Kernel Hilbert Spaces]{Density of Sampling and Interpolation in Reproducing Kernel
  Hilbert Spaces}
\author[H. F\"uhr]{Hartmut F\"uhr}
\author[K. Gr\"ochenig]{Karlheinz Gr\"ochenig}
\author[A. Haimi]{Antti Haimi}
\author[A. Klotz]{Andreas Klotz}
\author[J. L. Romero]{Jos\'e Luis Romero}

\address{Lehrstuhl A f\"ur Mathematik, RWTH Aachen, 52056 Aachen}
\email{fuehr@matha.rwth-aachen.de}

\address{Faculty of Mathematics \\
University of Vienna \\
Oskar-Morgenstern-Platz 1 \\
A-1090 Vienna, Austria}

\email{karlheinz.groechenig@univie.ac.at}
\email{antti.haimi@univie.ac.at}
\email{andreas.klotz@univie.ac.at}
\email{jose.luis.romero@univie.ac.at}

\address{Acoustics Research Institute, Austrian Academy of Sciences,
Wohllebengasse 12-14, A-1040, Vienna Austria}

\email{jlromero@kfs.oeaw.ac.at}
\subjclass[2010]{42C15,94A12,46C05,42C30,32A70}
\date{}
\keywords{Reproducing kernel Hilbert space, Beurling density, frame,
  sampling theorem, interpolation, square-integrable representation,
  Fock space, bandlimited function}
\thanks{K.\ G.\ and A.\ K.\ were
  supported in part by the  project P26273 - N25  of the
Austrian Science Fund (FWF).
J.\ L.\ R. gratefully acknowledges support from the Austrian Science Fund (FWF): P 29462 - N35
and from a Marie Curie fellowship, under grant PIIF-GA-2012-327063
(EU FP7). A. H. was supported by the Lise Meitner grant M1821 of the
Austrian Science Fund (FWF)
and by the WWTF grant INSIGHT (MA16-053)}

\maketitle

\section{Introduction}

How many samples of a function $f$ are necessary to completely recover
$f$? The first answer is the sampling theorem of Whittaker,
Kotelnikov, Shannon, and others~\cite{unser00}. It provides an explicit and
elegant reconstruction formula for the recovery of a bandlimited
function from its samples on a grid and establishes a fundamental
relation between the bandwidth of $f$ and the sampling density (the
Nyquist rate in engineering terminology). This sampling theorem is the
basis of modern information theory~\cite{unser00} and remains the model for
analog-digital and digital-analog conversion.

The decisive mathematical theorems work for more general notions of
bandwidth and for non-uniform sampling and are due to
Beurling~\cite{beurling66,seip04}
(sufficient conditions) and Landau~\cite{La}.   Landau's
necessary conditions give a precise meaning to the concept of a
Nyquist rate for bandlimited functions.

To this day, Landau's theorem is the prototype of a density theorem,
it has inspired  several hundred papers on
sampling. Landau's necessary conditions  have been transferred,
modified, and adapted to dozens of similar situations.
Here is a short, but by no means exhaustive list of density theorems in
the wake   of Landau:

\begin{enumerate}[itemindent=0cm, leftmargin=1.3cm, itemsep=0.2cm, parsep=0cm]
\item Sampling in spaces of analytic functions, in particular, in
  Bargmann-Fock space~\cite{seip92,seip-wallsten,lyub92} and in
  generalized Fock spaces~\cite{Abreu10,ortega1998beurling,Lin01}.
\item Sampling of bandlimited functions with derivatives~\cite{GR96,LOC14}.
\item Necessary density conditions of Gabor
  frames~\cite{daubechies90,RS95}. This topic alone has attracted about
  hundred  papers, for a detailed history of this density theorem
  we refer to Heil's survey article~\cite{heil07}.
\item Density conditions for abstract frames with some localization properties~\cite{bacahela06, BCHL06,bala07}.
\item Sampling in spaces of bandlimited functions on Lie groups~\cite{fugr07}.
\item Sampling in spaces of variable bandwidth~\cite{GK15}.
\item Sampling in spaces of bandlimited functions associated to an integral
  transform, e.g., the Hankel transform~\cite{AB12}.
\item Density of frames in the orbit of an irreducible  unitary representation
  of a homogeneous nilpotent Lie group~\cite{hoefler}.
\end{enumerate}
Essentially each of these contributions on necessary density
conditions for sampling and interpolation uses and  modifies
one of three methods.
\begin{enumerate}[itemindent=0cm, leftmargin=1.3cm, itemsep=0.2cm, parsep=0cm]
\item  Landau's original method is based on the spectral analysis of a
  family of localization operators (composition of the projection onto
  bandlimited functions with a time-limiting operator). This method is
  very powerful, but can
  become quite technical. Usually, the generalization of Landau's
  method is difficult.
\item The method of Ramanathan and Steger~\cite{RS95} was originally
  developed to prove the density theorem for Gabor frames. It compares
  and estimates   the dimension of finite-dimensional  subspaces
  corresponding to a local patch of a sampling set with the dimension
  of finite-dimensional  subspaces
  corresponding to a local patch of an interpolating set. This is  the
  method used most frequently. However, it is not universally
  applicable, because it requires the existence of a set that is
  simultaneously sampling and interpolating (or at least the
  construction of interpolating sets and sampling sets with almost the same density).
\item A third method goes back to Kolountzakis and
  Lagarias~\cite{KL96} (proof of Lemma 2.3)  who studied the density of tilings by
  translation. This method was then used by  Iosevich and
  Kolountzakis~\cite{IK06} to prove a version of Landau's theorem and   by
  Nitzan-Olevski \cite{NO12} to give the simplest proof of Landau's
  theorem. With hindsight this method consists of
  comparing  a set of sampling to a continuous frame. For us  this
 is the  method of choice that we will use in our investigation.
\end{enumerate}

We observe that all density theorems listed above treat certain Hilbert spaces
with a reproducing kernel (or isomorphic copies thereof).
This fact and the similarity
of all  proofs raises the question of a universal density theorem in
reproducing kernel Hilbert spaces.  This point of
view leads immediately to the pertinent  questions: what are the concrete
conditions on the underlying configuration space and on the reproducing kernel to
lead to a density theorem? What is the relevant density concept in a
\rkhs ?
Is there a  critical density in a \rkhs\ that separates sets of
sampling from sets of interpolation?

In this paper we attempt to give an answer to these questions and will
prove a general density theorem for functions in a \rkhs . Here is a
simplified version of our main result.

\begin{tm} \label{tmintro}
  Let $X$ be a metric measure  space   with a
  metric $d$ and a measure $\mu $ such that balls have finite measure,
 $\mu $ is non-degenerate, and satisfies the weak annular decay property, i.e., $\inf
  _x \mu
  (B_r(x)) >0$ for some $r>0$,  and $$ \lim_{r \to \infty} \sup_{x
    \in X} \frac{\mu(B_{r}(x) \setminus B_{r-1}(x))}{\mu(B_r(x))} = 0
  \, .
$$

Furthermore, let $\cH \subseteq L^2(X,\mu )$ be a \rkhs\  with a
reproducing kernel $k(x,y)$ satisfying
$\inf_{x\in X} k(x,x) >0$ and
an off-diagonal decay condition
of the form
\begin{equation}
  \label{eq:c11}
|k(x,y)| \leq C \big( 1 + d(x,y) \big)  ^{-\sigma} \qquad \text{ for all } x, y \in X \,
\end{equation}
for some $\sigma> 0$ satisfying $\lim_{r \to \infty} \sup _{x\in X}
\int _{X\setminus B_r(x) } (1+d(x,y))^{-2 \sigma} d\mu (y) =
0 $.

\begin{itemize}[itemindent=0cm, leftmargin=1cm, itemsep=0.2cm, parsep=0.15cm]
\item[(i)] Necessary conditions for sampling:  If for $\Lambda \subset X$  there exist $A,B >0$ such that
\begin{equation}
  \label{eq:e1}
A \|f\| ^2 \leq \sum _{\lambda \in \Lambda } |f(\lambda )|^2 \leq B
\|f\|^2 \qquad \text{ for all } f \in \cH   \, ,
\end{equation}
 then
$$
D^- (\Lambda ) := \liminf _{r\to \infty } \inf _{x\in X} \frac{\# (\Lambda
      \cap B_r(x))}{\mu (B_r(x))} \geq
\liminf _{r\to \infty } \inf_{x \in X}\frac{1}{\mu (B_r(x))} \int
  _{B_r(x)} k(y,y) d\mu (y) \, .
$$

\item[(ii)] Necessary conditions for interpolation:  Likewise, let
$\Lambda \subset X$ and assume that for every $a \in \ell^2(\Lambda)$, there exists a function $f \in \cH$
such that
\begin{equation*}
f(\lambda) = a_\lambda, \qquad \lambda \in \Lambda,
\end{equation*}
then
$$
D^+ (\Lambda ) := \limsup _{r\to \infty } \sup _{x\in X} \frac{\# (\Lambda
      \cap B_r(x))}{\mu (B_r(x))} \leq
\limsup _{r\to \infty } \sup _{x \in X}\frac{1}{\mu (B_r(x))} \int
  _{B_r(x)} k(y,y) d\mu (y) \, .
$$
\end{itemize}
\end {tm}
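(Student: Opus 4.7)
The plan is to follow the continuous-frame comparison method credited to Kolountzakis--Lagarias and Nitzan--Olevski in the introduction. The reproducing formula makes $\{k_y\}_{y\in X}$ a \emph{continuous Parseval frame} for $\cH$: since $\cH\subseteq L^2(X,\mu)$ with the induced inner product, $\|f\|^2=\int_X |f(y)|^2\,d\mu(y)=\int_X |\langle f,k_y\rangle|^2\,d\mu(y)$. The key auxiliary object is the localization operator $T_r^x:=P_\cH M_{B_r(x)} P_\cH$, which is positive, trace-class, has $\|T_r^x\|\leq 1$, and by the identity $\int_X|k(y,z)|^2 d\mu(z)=k(y,y)$ has trace
\begin{equation*}
\mathrm{tr}(T_r^x)=\int_{B_r(x)} k(y,y)\,d\mu(y).
\end{equation*}
I will compare this trace with the local sample count (above it for sampling, below it for interpolation), sending $r\to\infty$ and then $R\to\infty$; the weak annular decay will supply $\mu(B_{r+R}(x))/\mu(B_r(x))\to 1$ uniformly in $x$, allowing me to pass between the two ball sizes in the denominator at the very end.

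\textbf{Part (ii): interpolation.} The hypothesis is equivalent to $\{k_\lambda\}_{\lambda\in\Lambda}$ being a Riesz sequence in $\cH$ with lower bound $A'$. Let $P_V$ be the orthogonal projection in $\cH$ onto $V=\mathrm{span}\{k_\lambda:\lambda\in\Lambda\cap B_r(x)\}$, so $\mathrm{rank}(P_V)=\#(\Lambda\cap B_r(x))$. The orthogonal decomposition
\begin{equation*}
\#(\Lambda\cap B_r(x))=\mathrm{tr}(P_V)=\mathrm{tr}(P_V T_{r+R}^x P_V)+\mathrm{tr}(P_V(I-T_{r+R}^x)P_V)
\end{equation*}
controls the main term by $\mathrm{tr}(P_V T_{r+R}^x P_V)\leq \mathrm{tr}(T_{r+R}^x)=\int_{B_{r+R}(x)} k(y,y)\,d\mu$. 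The error equals $\int_{X\setminus B_{r+R}(x)}\|P_V k_y\|^2\,d\mu(y)$, and the Riesz lower bound gives $\|P_V k_y\|^2\leq (1/A')\sum_{\lambda\in\Lambda\cap B_r(x)}|k(\lambda,y)|^2$. Swapping sum and integral and noting $d(\lambda,y)\geq R$ for $\lambda\in B_r(x),\,y\notin B_{r+R}(x)$, the error is bounded by $(1/A')\#(\Lambda\cap B_r(x))\cdot\sup_\lambda\int_{X\setminus B_R(\lambda)}|k(\lambda,y)|^2\,d\mu(y)$, which is $o_R(1)\cdot\#(\Lambda\cap B_r(x))$ by the summability hypothesis on $(1+d)^{-2\sigma}$. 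Dividing by $\mu(B_r(x))$, taking $\limsup_{r\to\infty}\sup_x$, and then $R\to\infty$ with annular decay gives (ii).

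\textbf{Part (i) and the main obstacle.} For sampling the argument is dual. With $Q_F$ the orthogonal projection in $\cH$ onto $\mathrm{span}\{k_\lambda:\lambda\in\Lambda\cap B_{r+R}(x)\}$, I decompose
\begin{equation*}
\mathrm{tr}(T_r^x)=\mathrm{tr}(Q_F T_r^x Q_F)+\int_{B_r(x)}\|(I-Q_F)k_y\|^2\,d\mu(y).
\end{equation*}
The first term is bounded by $\mathrm{rank}(Q_F)\,\|T_r^x\|\leq\#(\Lambda\cap B_{r+R}(x))$, and everything reduces to showing the second term is $o_R(1)\cdot\mu(B_r(x))$ uniformly in $x$. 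Setting $h=(I-Q_F)k_y$, one has $h(\lambda)=0$ for $\lambda\in F\cap\Lambda$, so the sampling inequality gives $A\|h\|^2\leq\sum_{\lambda\notin F}|h(\lambda)|^2$, where $h(\lambda)=k(\lambda,y)-(Q_F k_y)(\lambda)$. The first summand decays by the off-diagonal bound on $k$, but the second --- the value of the best $\cH$-approximation of $k_y$ by $\{k_\mu:\mu\in F\cap\Lambda\}$ at an off-$F$ point $\lambda$ --- is the \emph{main technical obstacle}: it demands that the dual frame elements $\tilde{k}_\lambda=S_\Lambda^{-1}k_\lambda$ be spatially localized at $\lambda$, i.e., a Wiener-lemma-type result asserting that the Gram-type matrix $G=(k(\lambda,\mu))_{\lambda,\mu\in\Lambda}$ inherits its off-diagonal decay under inversion. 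The paper's hypotheses on the exponent $\sigma$ and the summability of $(1+d)^{-2\sigma}$ are calibrated precisely to place $G$ in an inverse-closed Banach algebra of off-diagonally decaying matrices, which yields $\sum_{\lambda\notin F}|\tilde{k}_\lambda(y)|^2\to 0$ uniformly for $y\in B_r(x)$, hence $\|(I-Q_F)k_y\|\to 0$. Combined with the same division-and-annular-decay limit as in (ii), this establishes (i).
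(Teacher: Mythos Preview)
Your argument for part~(ii) is correct and essentially coincides with the alternative interpolation proof the paper gives in Remark~4.2: writing $\#(\Lambda\cap B_r)$ as the trace of the projection onto $V=\mathrm{span}\{k_\lambda:\lambda\in\Lambda\cap B_r\}$, localizing to $B_{r+R}$, and controlling the tail via the weak localization of $k$ alone.

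For part~(i), however, there is a genuine gap. Your projection approach reduces the problem to bounding $\|(I-Q_F)k_y\|$, and you correctly observe that this is the classical Ramanathan--Steger form of the homogeneous approximation property, which in turn would follow from off-diagonal decay of the \emph{dual} frame $S_\Lambda^{-1}k_\lambda$. But the hypotheses of the theorem are \emph{not} calibrated for a Wiener-lemma argument: the integrability condition on $(1+d)^{-2\sigma}$ only forces, in the Euclidean model $X=\mathbb{R}^d$, that $\sigma>d/2$, whereas inverse-closedness of the Jaffard class of matrices with decay $(1+|\lambda-\mu|)^{-\sigma}$ requires $\sigma>d$. So the algebra machinery you invoke is not available at this level of generality, and the step ``the paper's hypotheses \ldots\ place $G$ in an inverse-closed Banach algebra'' is unjustified.

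The paper avoids this obstacle entirely by working not with projections but with the canonical dual frame identity $\sum_{\lambda\in\Lambda} k_\lambda(y)\overline{g_\lambda(y)}=k(y,y)$. Integrating this over $B_r(x)$ and splitting $\Lambda$ into $\Lambda\cap B_{r-R}$, $\Lambda\setminus B_{r+R}$, and the annulus, the only ``far'' term is
\[
\int_{B_r}\sum_{\lambda\notin B_{r+R}} k_\lambda(y)\overline{g_\lambda(y)}\,d\mu(y)
\le \int_{B_r}\Big(\sum_{\lambda\notin B_{r+R}}|k(y,\lambda)|^2\Big)^{1/2}
\Big(\sum_{\lambda}|g_\lambda(y)|^2\Big)^{1/2} d\mu(y).
\]
The Cauchy--Schwarz split is the whole point: the first factor is small by the \emph{paper's} axiom (HAP), namely $\sup_x\sum_{\lambda\notin B_R(x)}|k(x,\lambda)|^2\to 0$, which involves only the kernel $k$ and is derived directly from the decay hypothesis $|k(x,y)|\le C(1+d(x,y))^{-\sigma}$ together with relative separation of $\Lambda$. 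The second factor is merely bounded, by the upper frame bound of the dual frame; no localization of $g_\lambda$ is ever needed. Combined with $|\langle k_\lambda,g_\lambda\rangle|\le 1$ for the near term and the annular estimate for the boundary term, this gives $\int_{B_r}k(y,y)\,d\mu\le(1+\varepsilon C)\#(\Lambda\cap B_r)+\varepsilon C'\mu(B_r)+o(\mu(B_r))$, and the density bound follows. The paper explicitly remarks that its axiom (HAP) differs from the usual one; your proof attempts to establish the stronger classical version, which is both unnecessary and not guaranteed by the stated hypotheses.
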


Following established terminology, we call a set $\Lambda \subseteq X$
that satisfies the sampling inequality~\eqref{eq:e1} a set of (stable)
sampling, while a set satisfying the interpolation property in part (ii)
of Theorem \ref{tmintro} is called a set of interpolation. Alternatively,  $\Lambda $ is   a set of sampling, \fif\
$\{k_\lambda :\lambda \in \Lambda \}$ is a frame for $\cH $, and $\Lambda
$ is a set of interpolation \fif\ $\{k_\lambda :\lambda \in \Lambda \}$ is a Riesz sequence in $\cH $,
i.e., if there exist $A,B >0$ such that
\begin{equation}
  \label{eq:e2}
A \|c\| ^2 \leq \| \sum _{\lambda \in \Lambda } c_\lambda \kl \|_2^2 \leq B
\|c\|^2 \qquad \text{ for all } c\in \ell ^2(\Lambda ).
\end{equation}

The densities $D^-(\Lambda )$ and $D^+(\Lambda )$ are the
obvious generalizations of the lower and upper Beurling density to
metric spaces.

The principal merit of Theorem~\ref{tmintro} is the clarification of
the main notions that go into a density theorem. To prove a density
theorem, one needs

\begin{itemize}[itemindent=1cm, leftmargin=0cm, itemsep=0cm, parsep=0.15cm]
\item[(i)] geometric data and the compatibility of metric and measure, and
\item[(ii)] estimates for the reproducing kernel.
\end{itemize}

The verification of these properties is by no means trivial. Indeed,
kernel estimates (Bergman, Bargmann, and other reproducing kernels)
constitute a deep and rich area of analysis. Theorem~\ref{tmintro}
shifts the emphasis in proofs of density theorems: it is important to
understand the geometry and the reproducing kernel, but it is no
longer necessary to prove a  ``new'' density theorem  from scratch with tedious modifications of known
techniques.

As an example we show how Landau's original theorem follows from Theorem~\ref{tmintro}. The discussion also shows
some of the difficulties in applying Theorem~\ref{tmintro}.

Let $X= \rd $ with Lebesgue measure $\mu $  and $\Omega \subseteq \rd $ be a
set of finite Lebesgue measure and define $B_\Omega = \{ f\in \lrd :
\supp \hat f \subseteq \Omega \}$ to be the subspace of \bdl\ functions
with spectrum in $\Omega $. Then
$$
f(x) = \int _\Omega \hat f (\xi ) e^{2\pi i x\cdot \xi  } \, d\xi =
\intrd f(y) \int _\Omega e^{2\pi i \xi (x-y)} \, d\xi dy \, ,
$$
and therefore $B_\Omega $ is a \rkhs\ with reproducing kernel $k(x,y)
= \int _\Omega e^{2\pi i \xi (x-y)} \, d\xi = \widehat{1_\Omega }(y-x)$. Clearly $\rd $ satisfies
all geometric conditions of Theorem~\ref{tmintro}. The computation of
the averaged trace is equally easy, since $k(x,x) = \int _\Omega 1 \,
d\xi  = \mu (\Omega )$ and thus $\frac{1}{\mu (B_r(x))} \int
  _{B_r(x)} k(y,y) d\mu (y) = \mu (\Omega )$ independent of $x$ and
  $r$. Thus a set of sampling must satisfy $D^-(\Lambda ) \geq \mu
  (\Omega )$ and a set of interpolation $D^+(\Lambda ) \leq \mu
  (\Omega )$, which is Landau's theorem.
The  decay condition \eqref{eq:c11} is satisfied for simple
spectra, e.g., when  $\Omega $ is  a cube or a convex set with smooth
boundary. Yet we must be cautious:  in general the kernel does not
  satisfy the  decay condition in~\eqref{eq:c11}, because
  the Fourier transform of the characteristic function of a  compact
  sets may decay
  arbitrarily slowly~\cite{Grah87}. In the main theorem (Theorem~\ref{main}) we  will impose  a
  much weaker condition on the kernel and thus will  take care of
  this subtle  point.

Most  density theorems of the  list above  can be
understood as an example of the general density theorem in \rkhs s. To
demonstrate the wide applicability of Theorem~\ref{tmintro} we
will rederive some of the fundamental density theorems in several
areas of analysis.

\begin{itemize}[itemindent=1cm, leftmargin=0cm, itemsep=0.1cm, parsep=0cm]
\item[(i)] \emph{Signal analysis}: as already indicated, Theorem~\ref{tmintro}
implies Landau's necessary density conditions for bandlimited
functions.

\item[(ii)] \emph{Complex analysis} (in several variables): we will deduce
Lindholm's density conditions~\cite{Lin01} for generalized Fock
spaces.

\item[(iii)] \emph{Harmonic analysis}: We will derive a necessary condition
for the density of a frame in the orbit of a square-integrable,
unitary representation of a group of polynomial growth. A special case
of this result is  the density theorem for Gabor frames.
\end{itemize}

Theorem~\ref{tmintro} is definitely not the end  of density
theorems. Of course, it is our main ambition to prove new density results.
In this sense the axiomatic set-up serves as a preparation for future
work. Currently there are numerous results about sampling theorems for
``sufficiently dense'' sets. See for instance,
\cite{Pe98,Pes01,Peza09, petrushev15,FFP16}. These results need to be complemented
by a critical density, provided that it exists at all.

Finally, let us point out some limitations.
We mention that the weak annular
decay property of the measure is not always satisfied, as it is tied
to the  growth of balls in $X$. Thus Theorem~\ref{tmintro}
excludes a number of very interesting examples, for instance, density
theorems in Bergman spaces~\cite{seip93,seip04,schuster00} and the
density of wavelet frames~\cite{Ku07}. However, in these cases the
Beurling  density is not  the correct density, and to this date it is an open
problem whether  a critical density  always  exists in this context. In
our view  the Beurling density is the correct notion of
density in geometries with polynomial (or subexponential)  growth, whereas in geometries
with exponential growth new phenomena arise (which are not at all
understood). See  Section~\ref{exa} for a hint.

The paper  is organized as follows:  In Section~2 we collect  the set
of assumptions for a general density theorem and then formulate the
main result. Section~3 contains a discussion of the main hypotheses
and several technical lemmas.   In
Section~4 we provide the proof of the main density theorem. In
Section~5 we apply the density theorem to rederive several fundamental
density theorems from the literature.

\section{The Density Theorem}

\subsection{Assumptions}
\label{sec_assumptions}

We first list the general assumptions on the geometry and the
reproducing kernel to make a density theorem work.

\smallskip

\noindent (A) {\bf Assumptions on the  metric and the  measure.}  We assume that
$(X,d)$ is a metric space, and $\mu$ is a measure on $X$
with the following properties:
\begin{itemize}[itemindent=0cm, leftmargin=1cm, itemsep=0.2cm, parsep=0.15cm]
\item   The metric $d:X\times X \to [0,+\infty)$ is a $\mu \otimes \mu$ measurable function and the balls $B_r(x) = \{ y \in X: d(y,x)< r \}$ satisfy $\mu(B_r(x))< \infty$ for all $r>0$ and $x \in X$.

\item \emph{Non-degeneracy of balls} (Axiom NDB): There exist $r>0$ such that
\begin{align}
\label{eq_NDB}
\inf_{x\in X} \mu(B_r(x)) >0\, .
\end{align}

\item \emph{Weak annular decay property} (Axiom WAD): Spherical shells have
  small volume compared to full balls:
  \begin{equation}
    \label{eq:e3}
 \lim_{r \to \infty} \sup_{x \in X} \frac{\mu(B_{r+1}(x) \setminus
   B_{r}(x))}{\mu(B_r(x))} = 0\, .
  \end{equation}
\end{itemize}

\smallskip

\noindent (B) {\bf Assumptions on  the reproducing kernel.} We assume that $\cH
\subseteq L^2(X,\mu )$ is a reproducing kernel Hilbert
space with reproducing kernel $k(x,y) $ so that
$$
f(x) = \int _X k(x,y) f(y) d\mu (y) = \langle f , k_x \rangle,
$$
where $k_x (y) = k(y,x)= \overline{k(x,y)}$. The assumptions on the
kernel are as follows:
\begin{itemize}[itemindent=0cm, leftmargin=1cm, itemsep=0.2cm, parsep=0.15cm]
\item Condition on the diagonal (Axiom D): there exist constants $C_1, C_2>0$ such that
for all $x \in X$
\begin{align}
\label{eq_AD}
C_1 \leq k(x,x) \leq C_2.
\end{align}

\item \emph{Weak localization of the kernel} (Axiom WL):
 For every $\epsilon >0$ there is a constant
 $r= r(\epsilon) $, such that
 \begin{equation}\label{eq:hapcont}
\sup _{x\in X } \int_{X \setminus B_r(x) }\abs{k(x,y)}^2 d\mu (y) <
\epsilon ^2.
\end{equation}

\item \emph{The homogeneous approximation property} (Axiom HAP):
   Assume that $\Lambda $ is such that $\{k_\lambda: \lambda \in \Lambda\}$ is
a Bessel sequence for $\cH$, i.e., $\sum _{\lambda \in \Lambda }
|f(\lambda ) | ^2 \leq C \|f \| ^2$ for all $f\in \cH $.  Then for every $\epsilon >0$ there
 is a constant $r= r(\epsilon )$, such that
 \begin{equation}\label{eq:hapclass}
\sup _{x\in X } \sum_{\lambda  \in \Lambda \setminus B_r(x)}\abs{k(x,\lambda )}^2  < \epsilon ^2 \,.
\end{equation}
\end{itemize}
We note that this version of axiom (HAP) differs from the usual
homogeneous approximation property used in the literature. Compare the
discussion in ~\cite{bacahela06}.

\subsection{Sets, densities and traces}
A set $\Lambda \subseteq X$ is called \emph{relatively separated} if there exists $\rho_0>0$ such that for all $\rho \geq \rho_0$, there exists $C=C_\rho>0$
such that:
\begin{equation}\label{eq:def_rel_sep}
\# (\Lambda \cap B_\rho(x)) \leq C \mu(B_\rho(x)),
\qquad x \in X.
\end{equation}

\begin{definition}
  The lower  Beurling density of a set $\Lambda \subseteq X $ is defined to
  be
  \begin{equation}
    \label{eq:2low}
    D^-(\Lambda ) = \liminf _{r\to \infty } \inf_{x\in X} \frac{\# (\Lambda
      \cap B_r(x))}{\mu (B_r(x))} \, ,
  \end{equation}
and the upper Beurling density of $\Lambda $ is
  \begin{equation}
    \label{eq:2up}
    D^+(\Lambda ) = \limsup _{r\to \infty } \sup_{x\in X} \frac{\# (\Lambda
      \cap B_r(x))}{\mu (B_r(x))} \, .
  \end{equation}
\end{definition}
Note that for relatively separated sequences, the upper Beurling density is finite by the
non-degeneracy of the balls \eqref{eq_NDB}.

We will compare  the density of a set of sampling or interpolation to
an invariant of the reproducing kernel Hilbert space.
The correct Nyquist rate is the averaged trace of the kernel. We
define the lower and upper traces as follows:
\begin{align}
  \label{eq:4}
  \mathrm{tr}^- (k) = \liminf _{r\to \infty } \inf_{x \in X}\frac{1}{\mu (B_r(x))} \int
  _{B_r(x)} k(y,y) d\mu (y) \, ,
  \\
  \label{eq:4b}
  \mathrm{tr}^+ (k) = \limsup _{r\to \infty } \sup_{x \in X}\frac{1}{\mu (B_r(x))} \int
  _{B_r(x)} k(y,y) d\mu (y) \, .
\end{align}

\subsection{Main result}
In this general context one can prove the following necessary density
conditions for sets of sampling and interpolation.

\begin{tm} \label{main}
Assume that $\cH $ is a reproducing kernel Hilbert space of functions
on a metric measure  space $X$
satisfying the  assumptions of Section \ref{sec_assumptions}.

\begin{itemize}[itemindent=0cm, leftmargin=1.3cm, itemsep=0.2cm, parsep=0cm]

\item[(i)] If $\Lambda \subseteq X$ is a set of stable sampling for $\cH $, then
\begin{equation}
\label{eq:5a}
D^-(\Lambda ) \geq    \mathrm{tr}^- (k) \mbox{ and }
D^+(\Lambda ) \geq    \mathrm{tr}^+ (k).
\end{equation}

\item[(ii)] If $\Lambda \subseteq X$ is a set of interpolation  for $\cH $, then
\begin{equation}
  \label{eq:5b}
D^-(\Lambda ) \leq   \mathrm{tr}^- (k) \mbox{ and }
D^+(\Lambda ) \leq   \mathrm{tr}^+ (k).
\end{equation}
\end{itemize}
\end{tm}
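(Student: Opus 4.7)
The plan is to follow the third strategy in the introduction (Kolountzakis--Lagarias / Nitzan--Olevski): compare the discrete set $\Lambda$ with the \emph{continuous frame} $\{k_y\}_{y\in X}$ coming from the reproducing kernel, through the traces of localized operators. The key object is
\begin{equation*}
T_{r,x}\;:=\;P_\cH M_{\mathbf{1}_{B_r(x)}} P_\cH\;=\;\int_{B_r(x)} k_y\otimes k_y\,d\mu(y),
\end{equation*}
a positive self-adjoint trace-class operator with $0\le T_{r,x}\le I$ and $\mathrm{tr}(T_{r,x})=\int_{B_r(x)}k(y,y)\,d\mu(y)$, whose normalisation by $\mu(B_r(x))$ is exactly the quantity whose asymptotics define $\mathrm{tr}^\pm(k)$.

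For part (i), fix $\epsilon>0$, choose a safety radius $R=R(\epsilon)$ satisfying (WL) and (HAP) with tolerance $\epsilon$, set $\Lambda':=\Lambda\cap B_{r+R}(x)$, and let $P_{\Lambda'}$ be the orthogonal projection of $\cH$ onto $V_{\Lambda'}:=\overline{\mathrm{span}}\{k_\lambda:\lambda\in\Lambda'\}$. Then $P_{\Lambda'}T_{r,x}P_{\Lambda'}$ is positive, of rank $\le\#\Lambda'$ and of norm $\le 1$, hence $\mathrm{tr}(P_{\Lambda'}T_{r,x}P_{\Lambda'})\le\#(\Lambda\cap B_{r+R}(x))$. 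The heart of the argument is the approximation estimate
\begin{equation*}
\mathrm{tr}(T_{r,x})-\mathrm{tr}(P_{\Lambda'}T_{r,x}P_{\Lambda'})=\int_{B_r(x)}\|(I-P_{\Lambda'})k_y\|^2\,d\mu(y)\;\le\;\epsilon\,\mu(B_r(x)),
\end{equation*}
proved by applying the sampling inequality to $h=(I-P_{\Lambda'})k_y$ (whose samples on $\Lambda'$ vanish) and bounding $\sum_{\lambda\in\Lambda\setminus\Lambda'}|h(\lambda)|^2$ via (HAP) and (WL): the geometric fact that $d(\lambda,y)\ge R$ for $y\in B_r(x)$ and $\lambda\notin B_{r+R}(x)$ makes the kernel-tail axioms directly applicable. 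Combining these inequalities, dividing by $\mu(B_r(x))$, and using (WAD) to pass from $\mu(B_{r+R}(x))$ to $\mu(B_r(x))$ in the denominator, we let $r\to\infty$ and then $\epsilon\to 0$ to conclude $D^\pm(\Lambda)\ge\mathrm{tr}^\pm(k)$.

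For part (ii), the argument is dual: $\{k_\lambda\}_{\lambda\in\Lambda}$ is a Riesz sequence with bi-orthogonal dual $\{e_\lambda\}\subset V_\Lambda$. Set $\Lambda'':=\Lambda\cap B_{r-R}(x)$ and let $P_{\Lambda''}$ be the projection onto $V_{\Lambda''}$, which has rank exactly $\#\Lambda''$. One proves the reverse estimate
\begin{equation*}
\#(\Lambda\cap B_{r-R}(x))=\mathrm{tr}(P_{\Lambda''})\;\le\;\mathrm{tr}(T_{r,x})+\epsilon\,\mu(B_r(x))
\end{equation*}
by showing each $e_\lambda$ to be $\epsilon$-concentrated near $\lambda$: (HAP) applied to $\Lambda$ (which is Bessel, being a Riesz sequence) gives off-diagonal decay of the Gram matrix, and inverse-closedness of an appropriate off-diagonal-decay algebra transfers this to the Gram inverse, hence to the bi-orthogonal system. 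Dividing by $\mu(B_r(x))$ and invoking (WAD) once more yields $D^\pm(\Lambda)\le\mathrm{tr}^\pm(k)$.

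The main obstacle I expect is the approximation estimate in the sampling step. Writing $h(\lambda)=k(\lambda,y)-\langle P_{\Lambda'}k_y,k_\lambda\rangle$ for $\lambda\notin\Lambda'$, the first term is controlled directly by (HAP), but the second measures how well the truncated system $\{k_\lambda\}_{\lambda\in\Lambda'}$ reproduces $k_y$ and is not obviously small from the weak localization axioms alone. Making this quantitative—while keeping track of constants so that the lower sampling bound $A$ eventually cancels and the inequality becomes sharp (i.e.\ without any multiplicative constant between $\mathrm{tr}^\pm(k)$ and $D^\pm(\Lambda)$)—appears to require combining the sampling inequality with (HAP) in a self-improving manner, and is the technical core of the proof.
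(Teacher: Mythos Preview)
Your plan shares the Kolountzakis--Lagarias philosophy with the paper, but the two implementations diverge at the key technical step, and the divergence matters under the weak axioms of Section~\ref{sec_assumptions}.

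\textbf{Sampling.} Your projection approach is essentially Ramanathan--Steger: you need $\|(I-P_{\Lambda'})k_y\|^2$ to be uniformly small for $y\in B_r(x)$, i.e., the \emph{classical} homogeneous approximation property. The obstacle you flag is real: for $\lambda\notin\Lambda'$ the term $\langle P_{\Lambda'}k_y,k_\lambda\rangle$ is not controlled by axiom (HAP), which only bounds $\sum_{\lambda\notin B_R(y)}|k(y,\lambda)|^2$. Your ``self-improving'' fix would amount to deducing classical HAP from the paper's (HAP), and under axioms this weak that deduction is neither carried out nor obviously available. The paper sidesteps the issue by \emph{not projecting}. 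It takes the canonical dual frame $\{g_\lambda\}$, uses the identity $\sum_\lambda k_\lambda(y)\overline{g_\lambda(y)}=k(y,y)$, integrates over $B_r(x)$, and splits the sum over $\lambda$ into $\Lambda\cap B_{r-R}$, the annulus $\Lambda\cap(B_{r+R}\setminus B_{r-R})$, and $\Lambda\setminus B_{r+R}$. In every error term the dual enters only through the bounded quantities $\|g_\lambda\|$, $\sup_y\sum_\lambda|g_\lambda(y)|^2$, and $|\langle k_\lambda,g_\lambda\rangle|\le 1$ (Lemma~\ref{lem:RB-kernel}); all smallness comes from (WL) and (HAP) applied to $k$ alone. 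No localization of the dual system is ever required, so the obstacle you identified simply does not arise.

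\textbf{Interpolation.} Here there is a genuine gap. You invoke ``inverse-closedness of an appropriate off-diagonal-decay algebra'' to pass decay from the Gram matrix $(\langle k_\lambda,k_{\lambda'}\rangle)$ to its inverse, hence to the biorthogonal system $\{e_\lambda\}$. No such algebra is available: axioms (WL) and (HAP) are $L^2$-tail conditions and do not put the Gram matrix into any Banach algebra with a spectral invariance property. The paper again avoids needing any localization of the dual: with the biorthogonal basis $\{g_\lambda\}\subset V$ one has $\langle k_\lambda,g_\lambda\rangle=1$ and the projection inequality $0\le\sum_\lambda k_\lambda(y)\overline{g_\lambda(y)}\le k(y,y)$. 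The \emph{same} three-piece decomposition and the \emph{same} error estimates as in the sampling case (using only $\|g_\lambda\|\le C$) give
\[
\#(\Lambda\cap B_r)\le\int_{B_r}k(y,y)\,d\mu+\epsilon\,C\,\mu(B_r)+C'\,\#\bigl(\Lambda\cap(B_{r+R}\setminus B_{r-R})\bigr)+\epsilon\,C''\,\#(\Lambda\cap B_r),
\]
after which (WAD) and Lemma~\ref{annsep} finish the job. In fact (Remark~\ref{rem_nohap}) part~(ii) can be proved using (WL) only, without (HAP) at all---so your detour through Gram-inverse decay is both unjustified and unnecessary.

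The unifying device you are missing: rather than projecting onto finite-dimensional spans and comparing ranks, expand $k(y,y)$ (or bound it from above) via the dual system and arrange matters so that all localization hypotheses fall on $k$, the one object the axioms actually control.
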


It is instructive to work out the trivial  case of a space  $X$ with  finite diameter, i.e., $X = B_R(x)$ for all $x
\in X$. Assuming the existence of a set $\Lambda$ of sampling in this case, we observe by Lemma \ref{lemma_sam_rel}
below that $\Lambda$ is finite, hence $\cH $ is finite-dimensional. Choosing any orthonormal basis
$\varphi_1,\ldots,\varphi_m$ of $\cH $, we have
 \[
  k(x,y) = \sum_{j=1}^m \varphi _j(x) \overline{\varphi _j(y)}~.
 \]

In particular, we get
\[
 \mathrm{tr}^- (k)  = \mathrm{tr}^+(k) = \frac{1}{\mu(X)} \int_X k(y,y) d\mu(y) = \frac{m}{\mu(X)}~.
\]

On the other hand, given any subset $\Lambda$, we have
\[
 D^+(\Lambda) = D^-(\Lambda) = \frac{\# \Lambda}{\mu(X)}~.
\]

Hence Theorem \ref{main} says that $\# \Lambda \le m$ is a necessary condition for sets of interpolation, whereas
$\# \Lambda \ge m$ is necessary for sets  of sampling (=uniqueness). Of course,
both
results follow from an elementary  dimension count: The  cardinality
of a Riesz sequence  is bounded by the dimension of the underlying  vector
space, whereas  a frame must  contain a basis and its cardinality exceeds the dimension of
$\cH $.

\section{Discussion of the Assumptions and Preliminary Lemmas}
In the following we always  assume the axioms from Section
\ref{sec_assumptions}.  We  discuss the axioms and prove some easy consequences.

\subsection{Metric and measure}
Most of the geometric conditions are technical conditions to exclude
pathologies.

First note that we \emph{do not} assume that the $\sigma$-algebra of $\mu$-measurable sets - the domain of $\mu$ - is generated by the class of open sets associated with the metric $d$. Rather, we only assume that the function $d$ is $\mu \otimes \mu$ measurable. This means that the sets $\{(x,y) \in X \times X: d(x,y) <r\}$ belong to the smallest $\sigma$-algebra generated by the sets $A \times B$, with $A,B$ in the domain of $\mu$. In particular, the balls
$B_r(x)$ are $\mu$ measurable. However, more general sets that are open with respect to the metric $d$ may not be $\mu$ measurable, since they may fail to be a countable union of balls.

The decisive condition is the weak annular decay property
(WAD). This condition links the metric and the measure and imposes
some compatibility between them. Even in simple examples, axiom (WAD)
requires some care. For example, the standard metric
$d_1(x,y) = |x-y|$ on $\mathbb{R}$ with Lebesgue measure fulfills
the weak annular decay property, but
the topologically equivalent  metric $d_2(x,y) = \log(1+|x-y|)$ violates (WAD).

\begin{rem} {\normalfont
For complicated geometries the
verification of the weak annular decay property is decidedly
non-trivial. In the literature one often uses the stronger
 {\em annular decay property} \cite{tes07}. A metric measure space
 $(X,d,\mu )$ satisfies this property, if there exist
 constants $C>0$ and $\delta\in(0,1]$ such that  for every $h \in
 [0,1]$, $r>0$, $x \in X$
 \begin{equation}
 \label{eq_ADP}
 \mu(B_{r}(x) \setminus B_{(1-h)r}(x))\leq C h^\delta\mu(\overline{B_r}(x)).
 \end{equation}
The annular decay property implies the weak annular decay property. Indeed,
if \eqref{eq_ADP} holds, the choice  $h = r^{-1}$ shows that
$\mu(B_{r}(x) \setminus B_{r-1}(x))\leq C r^{-\delta}  \mu(\overline{B_r}(x))$.
In addition, for $r  \geq 1$ and
$\varepsilon>0$,
\begin{align*}
\mu(\overline{ B_r}(x)) &\leq \mu(B_{r+\varepsilon}(x)) \leq \mu(B_{r}(x)) + C \varepsilon^{\delta}(r+\varepsilon)^{-\delta} \mu(\bar B_r(x))
\\ &\leq \mu(B_{r}(x)) +
C \varepsilon^{\delta} \mu(\overline{ B_r}(x)).
\end{align*}
So, choosing $\varepsilon $ small enough ($\varepsilon =
(2C)^{-1/\delta}$), we see that $\mu(\overline{ B_r}(x)) \leq C \mu(B_r(x))$.
Similarly, we see that $\mu(B_{r}(x)) \leq C' \mu(B_{r-1}(x))$, for $r \gg 1$
and a constant $C'>0$, so the conclusion follows.

In the literature one finds several conditions that imply the annular
decay property, for instance, if $X$ is a length space or if $X$ has
monotone geodesics. See~\cite{Buckley99,tes07} for further discussion
and more information on the annular decay property.
Axiom (WAD) seems to be tied to the growth of balls and seems  compatible
with at most subexponential growth.}
\end{rem}

We now note that the measure $\mu$ is \emph{locally doubling at large scales}.

\begin{lemma}
\label{lemma_loc_do}
There exist $r_0>0$ such that for all $r \geq r_0$, there is a
  constant $C_r>0$ such that
\begin{align}
\label{eq_ld}
\mu(B_{2 r}(x)) \leq C_r \mu(B_r(x))\, \quad \text{ for all } x\in X.
\end{align}
\end{lemma}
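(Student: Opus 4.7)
The strategy is to iterate the weak annular decay property $\lceil r \rceil$ times, walking outward from radius $r$ to radius $2r$ in unit steps. Since the resulting number of iterations depends on $r$ but the one-step bound is uniform in the centre, we obtain a doubling constant that is uniform in $x$ at each fixed radius, which is exactly what the lemma asks for.

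First, I would ensure that the denominators appearing in WAD are positive. The non-degeneracy axiom (NDB) provides some $r_* > 0$ with $\inf_{x \in X} \mu(B_{r_*}(x)) > 0$; by monotonicity, $\mu(B_r(x)) > 0$ for all $x \in X$ and all $r \geq r_*$. Set
\[
\eta(r) := \sup_{x \in X} \frac{\mu(B_{r+1}(x))}{\mu(B_r(x))}, \qquad r \geq r_*.
\]
Then (WAD) reads $\eta(r) = 1 + o(1)$ as $r \to \infty$; in particular, there exists $r_0 \geq r_*$ such that $\eta(r) \leq 2$ for all $r \geq r_0$, so $\eta$ is finite on $[r_0, \infty)$.

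Next, I would iterate the one-step bound $\mu(B_{s+1}(x)) \leq \eta(s)\,\mu(B_s(x))$ for $s = r, r+1, \ldots, r+n-1$, obtaining
\[
\mu(B_{r+n}(x)) \leq \Big(\prod_{k=0}^{n-1} \eta(r+k)\Big)\mu(B_r(x))
\]
for every integer $n \geq 1$ and every $r \geq r_0$. Choosing $n := \lceil r \rceil$, so that $B_{2r}(x) \subseteq B_{r+n}(x)$, yields
\[
\mu(B_{2r}(x)) \leq C_r\,\mu(B_r(x)), \qquad C_r := \prod_{k=0}^{\lceil r \rceil - 1} \eta(r+k).
\]
Since the supremum over $x$ was already absorbed into the definition of $\eta$, the constant $C_r$ depends only on $r$, and it is finite because it is a product of finitely many terms each bounded by $2$.

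The only mildly delicate point in the argument is guaranteeing that the ratios defining $\eta$ are well-defined and finite, which is precisely where the interaction of (NDB) with the quantitative decay in (WAD) is needed; the remainder is routine bookkeeping. In particular, the estimate is wasteful for large $r$ (yielding $C_r \leq 2^{\lceil r \rceil}$), but this is harmless since the lemma only claims existence of \emph{some} finite doubling constant at each fixed radius.
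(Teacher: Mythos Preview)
Your proof is correct and follows essentially the same route as the paper: pick $r_0$ so that the one-step growth factor is at most $2$, then iterate $\lceil r\rceil$ times to pass from radius $r$ to radius $2r$, obtaining $C_r \le 2^{\lceil r\rceil}$. The only cosmetic difference is that the paper works with the difference form $\mu(B_{r+1}(x)\setminus B_r(x))\le \mu(B_r(x))$ rather than the ratio $\eta(r)$, which sidesteps the need to invoke (NDB) for the positivity of the denominator; indeed the paper remarks that the lemma relies on (WAD) alone.
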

\begin{proof}
By Axiom (WAD), there exist $r_0$, such that for $r \geq r_0$
and all $x \in X$,
$\mu(B_{r+1}(x) \setminus B_r(x)) \leq \mu(B_r(x))$. As a consequence,
$\mu(B_{r+1}(x)) \leq 2 \mu(B_r(x))$. Iterating this estimate we conclude that
$\mu(B_{2r}(x)) \leq C_r \mu(B_r(x))$, where $C_r := 2^{\lceil r \rceil}$.
\end{proof}
\begin{rem}
\label{rem_ld}
{\normalfont Note that the proof of Lemma \ref{lemma_loc_do} only depends on Axiom (WAD).

The locally doubling property in \eqref{eq_ld} is much weaker than the usual doubling property for measures. See \cite{Tessera08, Coulhon95} for more on locally doubling spaces.
}
\end{rem}

We next formulate two lemmas on the number of points in and the
measure of general  ``spherical'' shells.

\begin{lemma} \label{annsep}
Let $\Lambda \subseteq X$ be relatively separated.
Then for all sufficiently large $\rho>0$, and
$R>\rho$, $r>0$, $x \in X$, we have
\label{lem:relsep}
  \begin{equation}
    \label{eq:relsep1}
      \# \Big(\Lambda \cap (B_{R+r}(x) \setminus B_{R}(x)) \Big)\leq
      C_{\rho,\Lambda} \, \mu (B_{R+r+\rho}(x) \setminus
B_{R-\rho}(x)) \,,
  \end{equation}
where the constant $C_{\rho,\Lambda}$  depends only on $\rho$ and
$\Lambda$.
  \end{lemma}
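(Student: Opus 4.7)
The plan is to cover the annular region by $\rho$-balls centered at a maximal $\rho$-separated net, count $\Lambda$-points inside each ball via the relative separation hypothesis, and then convert the resulting sum of measures into the measure of a slightly enlarged annulus by invoking the disjointness of $B_{\rho/2}$-balls together with the locally doubling property (Lemma \ref{lemma_loc_do}).

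More concretely, I choose $\rho$ large enough that two things hold: $\rho \geq \rho_0$ (so that \eqref{eq:def_rel_sep} is available with constant $C_\rho$), and $\rho/2 \geq r_0$ (so that Lemma \ref{lemma_loc_do} yields $\mu(B_\rho(y)) \leq C_{\rho/2}\,\mu(B_{\rho/2}(y))$ for every $y \in X$). Write $A = B_{R+r}(x) \setminus B_R(x)$ for the annulus in question. Using Zorn's lemma, pick a maximal subset $\{y_i\}_{i \in I} \subseteq A$ with the property that $d(y_i, y_j) \geq \rho$ for $i \neq j$. Maximality gives the covering $A \subseteq \bigcup_{i \in I} B_\rho(y_i)$, while the $\rho$-separation implies that the smaller balls $\{B_{\rho/2}(y_i)\}_{i \in I}$ are pairwise disjoint.

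The geometric input is the containment
\begin{equation*}
\bigsqcup_{i \in I} B_{\rho/2}(y_i) \subseteq B_{R+r+\rho/2}(x) \setminus B_{R-\rho/2}(x) \subseteq B_{R+r+\rho}(x) \setminus B_{R-\rho}(x),
\end{equation*}
which follows from the triangle inequality applied to any $y \in B_{\rho/2}(y_i)$ and uses $R > \rho$ to guarantee that the inner radius $R - \rho$ is positive. Since each $B_{\rho/2}(y_i)$ has positive measure by axiom (NDB) and they are disjoint inside a set of finite measure, the index set $I$ is automatically finite.

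The estimate is then assembled by stringing together these facts:
\begin{equation*}
\#(\Lambda \cap A) \leq \sum_{i \in I} \#(\Lambda \cap B_\rho(y_i)) \leq C_\rho \sum_{i \in I} \mu(B_\rho(y_i)) \leq C_\rho C_{\rho/2} \sum_{i \in I} \mu(B_{\rho/2}(y_i)),
\end{equation*}
and the last sum equals $\mu\bigl(\bigsqcup_i B_{\rho/2}(y_i)\bigr) \leq \mu(B_{R+r+\rho}(x) \setminus B_{R-\rho}(x))$ by the containment above. The constant $C_{\rho,\Lambda} := C_\rho C_{\rho/2}$ depends only on $\rho$ and (through $C_\rho$) on $\Lambda$. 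The only subtlety I foresee is the interplay between the ``lower bound'' on $\rho$ needed for relative separation and the ``lower bound'' needed for local doubling; choosing $\rho$ larger than both thresholds handles this, and the passage from the radius $\rho/2$ in the disjoint balls to the radius $\rho$ in the statement's enlarged shell is exactly what the locally doubling property buys us.
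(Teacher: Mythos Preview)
Your proof is correct and follows essentially the same approach as the paper: take a maximal $\rho$-separated net in the annulus (equivalently, a maximal packing by $B_{\rho/2}$-balls), use maximality to cover the annulus by the doubled balls $B_\rho(y_i)$, apply relative separation and local doubling, and then use disjointness of the small balls inside the enlarged shell to conclude. The only cosmetic differences are your explicit invocation of Zorn's lemma and the remark that $I$ is finite, neither of which affects the argument.
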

\begin{proof}
By Lemma \ref{lemma_loc_do}, for sufficiently large $\rho$, the locally doubling property
\begin{align}
\label{eq_a}
\mu(B_{\rho}(x)) \leq C_{\rho/2} \mu(B_{\rho/2}(x))
\end{align}
and the estimate in \eqref{eq:def_rel_sep} hold.

Let
$\set{B_{\rho /2} (y) \colon y \in X_0}$ be a maximal packing of $B_{R+r}(x) \setminus B_{R}(x)$.
This means that (i) $X_0 \subseteq B_{R+r}(x) \setminus B_{R}(x)$,
(ii) $\set{B_{\rho /2} (y) \colon y \in X_0}$ is a disjoint family of balls
and (iii) the family is maximal with respect to the properties (i) and (ii).
By maximality,
 $\set{B_{ \rho}(y) \colon y \in X_0}$ is a
covering of  $B_{R+r}(x)
\setminus B_{R}(x)$.
Using \eqref{eq:def_rel_sep} and \eqref{eq_a},
we obtain
\begin{align*}
\# \big(\Lambda \cap (B_{R+r}(x) \setminus B_{R}(x)) \big) &\leq
\# \big(\bigcup_{y \in X_0}\Lambda \cap B_{ \rho}(y) \big)
\leq C \sum_{y \in X_0} \mu( B_{ \rho}(y) ) \\
& \leq C C_{\rho/2} \sum_{y \in X_0}  \mu( B_{ \rho/2}(y) )= C C_{\rho/2}
\mu \left( \bigcup_{y \in X_0}  B_{ \rho/2}(y) \right) \\
&\leq   C  C_{\rho/2}  \, \mu (B_{R+r+\rho}(x) \setminus B_{R-\rho}(x)),
\end{align*}
where $C$ provided by \eqref{eq:def_rel_sep}  depends only  on $\rho$ and $\Lambda$.
\end{proof}

We will apply the weak annular decay property in the following
versions.
\begin{lemma} \label{lemma_annular_decay}
  If $(X,d,\mu )$ satisfies the weak annular decay property, then, for
  all $\rho ' >0$,
  \begin{equation}
   \label{eq:ann_decay}
   \lim_{r \to \infty} \sup_{x \in X} \frac{\mu(B_{r+\rho'}(x))}{\mu(B_r(x))} = 1~,
  \end{equation}
and
  \begin{equation}
    \label{eq:c1}
    \lim _{r\to \infty } \sup _{x\in X} \frac{\mu (B_{r+\rho '} (x)
      \setminus B_{r-\rho '} (x))}{\mu (B_r(x))} = 0 \, .
  \end{equation}
\end{lemma}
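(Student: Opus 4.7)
The plan is to derive both limits by telescoping the weak annular decay property over unit-width shells and then controlling the resulting sum via the local doubling estimate of Lemma \ref{lemma_loc_do}. Both limits are trivially bounded in the "easy" direction by inclusion of balls, so the real work is the matching upper bound uniformly in $x$.

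For statement (i), I would set $N = \lceil \rho' \rceil$ and write the telescoping identity
\begin{equation*}
\mu(B_{r+\rho'}(x)) \le \mu(B_{r+N}(x)) = \mu(B_r(x)) + \sum_{k=0}^{N-1} \mu\bigl(B_{r+k+1}(x) \setminus B_{r+k}(x)\bigr).
\end{equation*}
Given $\varepsilon>0$, axiom (WAD) provides $R$ such that for every $r \geq R$ and every $x \in X$ one has $\mu(B_{s+1}(x)\setminus B_s(x)) \leq \varepsilon\, \mu(B_s(x))$ for all $s \geq r$. Iterating the local doubling bound $\mu(B_{s+1}(x))\leq 2\mu(B_s(x))$ from Lemma \ref{lemma_loc_do} yields $\mu(B_{r+k}(x)) \leq 2^k \mu(B_r(x))$, so each shell in the telescoped sum is at most $\varepsilon \cdot 2^k \mu(B_r(x))$. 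Summing gives $\mu(B_{r+\rho'}(x)) \leq (1+\varepsilon(2^N-1))\,\mu(B_r(x))$ uniformly in $x$, and since $\varepsilon$ is arbitrary and the reverse inequality is obvious from $B_r(x)\subseteq B_{r+\rho'}(x)$, the ratio tends to $1$ uniformly.

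For statement (ii), I would reduce to (i). Since $B_{r-\rho'}(x)\subseteq B_{r+\rho'}(x)$ and both have finite measure,
\begin{equation*}
\frac{\mu(B_{r+\rho'}(x)\setminus B_{r-\rho'}(x))}{\mu(B_r(x))} = \frac{\mu(B_{r+\rho'}(x))}{\mu(B_r(x))} - \frac{\mu(B_{r-\rho'}(x))}{\mu(B_r(x))}.
\end{equation*}
The first term tends to $1$ uniformly by (i). Applying (i) with $r$ replaced by $r-\rho'$ shows that $\mu(B_r(x))/\mu(B_{r-\rho'}(x))\to 1$ uniformly, hence $\mu(B_{r-\rho'}(x))/\mu(B_r(x))\to 1$ uniformly as well. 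Taking the supremum over $x$ of the difference then yields the desired limit $0$.

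The only delicate bookkeeping is the uniformity in $x$ inside the telescoping: each invocation of (WAD) must be made at all centers simultaneously, and the doubling constant $2^k$ must not depend on $x$. Both are supplied by the $\sup_x$ in (WAD) and by the fact that Lemma \ref{lemma_loc_do} is uniform in $x$, so no further obstacle arises.
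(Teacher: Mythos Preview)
Your proof is correct and follows essentially the same route as the paper: telescope the ratio over $N=\lceil\rho'\rceil$ unit-width shells and use (WAD) on each, then deduce (ii) from (i) by writing the annulus as a difference of balls. The only cosmetic difference is that the paper telescopes \emph{multiplicatively}, bounding $\sup_x \mu(B_{r+N}(x))/\mu(B_r(x))$ by $\prod_{j=1}^N \sup_x \mu(B_{r+j}(x))/\mu(B_{r+j-1}(x))$, which sidesteps the appeal to Lemma~\ref{lemma_loc_do} that your additive telescoping requires.
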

\begin{proof}
For the proof of (\ref{eq:ann_decay}), we observe that weak annular decay is equivalent to
\[
 \lim_{r \to \infty} \sup_{x \in X} \frac{\mu(B_{r+1}(x))}{\mu(B_r(x))} = 1~.
\]
For given $\rho'>0$, we set $N = \lceil \rho' \rceil$ and obtain
\begin{eqnarray*}
1 & \le & \sup_{x \in X}  \frac{\mu(B_{r+\rho'}(x))}{\mu(B_r(x))}  \le \sup_{x \in X}  \frac{\mu(B_{r+N}(x))}{\mu(B_r(x))} \\
& \le & \prod_{j=1}^N \underbrace{\sup_{x \in X} \frac{\mu(B_{r+j}(x))}{\mu(B_{r+j-1}(x))}}_{\to 1, \mbox{ as } r \to
\infty} ~.
\end{eqnarray*}
This proves (\ref{eq:ann_decay}). For the proof of (\ref{eq:c1}) note that
\begin{eqnarray*}
 0 & \le & \sup_{x \in X} \frac{\mu(B_{r+\rho'}(x)\setminus B_{r-\rho'}(x))}{\mu(B_r(x))} \\
 & \le & \sup_{x \in X}  \frac{\mu(B_{r+\rho'}(x))}{\mu(B_r(x))} - \inf_{x \in X} \frac{\mu(B_{r-\rho'}(x))}{\mu(B_r(x))}
\end{eqnarray*}
which converges to $1-1=0$ by  (\ref{eq:ann_decay}), as $r \to \infty$.
\end{proof}

\subsection{The reproducing kernel}

In practice, the upper bound in Axiom D \eqref{eq_AD} and the weak localization
property follow from off-diagonal decay estimates for the kernel (see
Section \ref{sec_apps}). To verify the lower bound in Axiom (D), the
following observation can be  useful.
\begin{lemma}
\label{obs_k_bb}
Let $k$ be a reproducing kernel on $X \times X$. If  there is a
constant $C>0$ such that for all $x \in X$, there exists
$f_x \in X$ such that $\norm{f_x} \leq C$ and $f_x(x)=1$, then the lower bound in Axiom (D) holds.
\end{lemma}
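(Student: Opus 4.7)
The plan is to use the reproducing property in combination with the Cauchy--Schwarz inequality to turn pointwise information about $f_x$ into a lower bound on $\|k_x\|$, and hence on $k(x,x)$.

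First, I would note what appears to be a minor typo: the hypothesis should read $f_x \in \cH$ rather than $f_x \in X$, since membership in the \rkhs\ is what makes the reproducing identity applicable. Fixing $x \in X$ and invoking the reproducing property, we have
\begin{equation*}
1 = f_x(x) = \langle f_x, k_x \rangle.
\end{equation*}
Applying Cauchy--Schwarz and the norm bound on $f_x$ gives
\begin{equation*}
1 = |\langle f_x, k_x \rangle| \leq \|f_x\| \cdot \|k_x\| \leq C \|k_x\|,
\end{equation*}
so $\|k_x\| \geq 1/C$.

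Now I would use the standard identity $\|k_x\|^2 = \langle k_x, k_x \rangle = k_x(x) = k(x,x)$ (which itself is another application of the reproducing property, reading $k_x$ as a function in $\cH$ and evaluating at $x$). Combining with the previous step yields $k(x,x) \geq 1/C^2$ uniformly in $x$, which is precisely the lower bound in Axiom (D) with $C_1 = 1/C^2$.

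There is no real obstacle here: the argument is a routine two-line consequence of the reproducing property and Cauchy--Schwarz. The only subtlety worth mentioning in an exposition is that the bound is sharp in the sense that the extremal $f_x$ (up to normalization) is $k_x/k(x,x)$, which clarifies why the hypothesis is the natural dual formulation of the conclusion.
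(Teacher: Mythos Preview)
Your proof is correct and is essentially identical to the paper's one-line argument: $1=f_x(x)=\langle f_x,k_x\rangle \leq C\,\|k_x\| = C\,k(x,x)^{1/2}$. Your observation about the typo ($f_x\in\cH$ rather than $f_x\in X$) is also accurate.
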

\begin{proof}
The claim  follows from  $1=f_x(x)=\ip{f_x}{k_x} \leq C \norm{k_x} = C
k(x,x)^{1/2}$.
\end{proof}

\noindent \emph{Normalization of the reproducing kernel.}
In some  examples, e.g., in Fock spaces of entire functions, the
reproducing kernel is unbounded. This situation can be dealt with by
the following normalization. Let $\psi : X \to \bR ^+$ and define a
new measure $\tilde \mu = \psi ^2 \mu $. Then $J$ defined by $Jf = f
\psi \inv $ is an isometry from $L^2(X,\mu )$ onto $L^2(X,\tilde \mu
)$ and  $\tilde \cH = J\cH\subseteq L^2(X,\tilde \mu )$ is again a
\rkhs . We calculate the new  reproducing kernel $\tilde
k$ as follows:
$$
Jf(x) = \psi (x) \inv f(x) = \psi (x) \inv \langle f, k_x\rangle =
\psi (x) \inv \langle Jf, Jk_x\rangle \, ,
$$
whence the new reproducing kernel is
\begin{equation}
  \label{eq:v3}
\tilde k(x,y) = \psi (x) \inv \psi (y)\inv k(x,y) \qquad   x,y \in X
\, .
\end{equation}
In particular, if we choose $\psi (x) = \|k_x\|$, then
$\tilde k(x,y) = \|k_x\| ^{-1} \|k_y\|\inv  k(x,y)$ and thus
$ \tilde k(x,x) = 1$.  Axiom D \eqref{eq_AD} can therefore always be fulfilled by
a renormalization of the kernel. However, this may be at the price of
destroying some other required properties. Note that in this
normalization the critical density is always one, provided that
Theorem~\ref{main} is still applicable. See also \cite{chha13}
for normalization of weighted Bergman spaces on the disk.

\begin{lemma}
\label{lemma_sam_rel}
Let
$\Lambda \subseteq X$ be a set such that $\{k_\lambda: \lambda \in
\Lambda\}$ is a Bessel sequence in $\cH $. Then
$\Lambda$ is relatively separated.
\end{lemma}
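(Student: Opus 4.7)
The plan is to exploit the Bessel bound against the reproducing kernels $k_x$ themselves, combined with the uniform diagonal control (Axiom D) and the weak localization (Axiom WL), and then convert an estimate on a slightly enlarged ball into one on $B_\rho(x)$ using the local doubling property from Lemma \ref{lemma_loc_do}.

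First, applying the Bessel inequality to $f = k_y$ and using the reproducing property $\langle k_y, k_\lambda\rangle = k(\lambda, y) = \overline{k(y,\lambda)}$, I obtain the pointwise bound
\[
\sum_{\lambda \in \Lambda} |k(y,\lambda)|^2 \leq B\,\|k_y\|^2 = B\,k(y,y) \leq B C_2
\]
for every $y \in X$, where I have used the upper bound in Axiom D. Integrating over any ball $B_R(x)$ and interchanging sum and integral by Tonelli yields
\[
\sum_{\lambda \in \Lambda} \int_{B_R(x)} |k(y,\lambda)|^2\, d\mu(y) \leq B C_2\, \mu(B_R(x)).
\]

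Next, I invoke Axiom WL with $\epsilon^2 = C_1/2$ to produce $r_0 = r(\sqrt{C_1/2})$ such that $\int_{X\setminus B_{r_0}(\lambda)} |k(\lambda,y)|^2 d\mu(y) < C_1/2$ for every $\lambda$. Since $\int_X |k(\lambda,y)|^2 d\mu(y) = k(\lambda,\lambda) \geq C_1$ by the lower bound in Axiom D, it follows that
\[
\int_{B_{r_0}(\lambda)} |k(\lambda,y)|^2\, d\mu(y) \geq C_1/2.
\]
Now fix $x \in X$ and $\rho>0$, and set $R = \rho + r_0$. For each $\lambda \in \Lambda \cap B_\rho(x)$ the inclusion $B_{r_0}(\lambda) \subseteq B_{\rho+r_0}(x)$ holds, so the contribution of such $\lambda$ to the previous sum is at least $C_1/2$. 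Restricting the sum to $\Lambda \cap B_\rho(x)$ therefore gives
\[
\frac{C_1}{2}\, \#\bigl(\Lambda \cap B_\rho(x)\bigr) \leq B C_2\, \mu(B_{\rho+r_0}(x)).
\]

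The final step is to replace $\mu(B_{\rho+r_0}(x))$ by a multiple of $\mu(B_\rho(x))$. For $\rho$ larger than both $r_0$ and the threshold in Lemma \ref{lemma_loc_do}, the bound $\rho+r_0 \leq 2\rho$ together with Lemma \ref{lemma_loc_do} gives $\mu(B_{\rho+r_0}(x)) \leq C_\rho\, \mu(B_\rho(x))$ uniformly in $x$ (alternatively, one could apply \eqref{eq:ann_decay} of Lemma \ref{lemma_annular_decay} with $\rho' = r_0$ to obtain a constant independent of $\rho$ for large $\rho$). Combining the two inequalities yields
\[
\#\bigl(\Lambda \cap B_\rho(x)\bigr) \leq \frac{2 B C_2}{C_1}\, C_\rho\, \mu(B_\rho(x))
\]
for all sufficiently large $\rho$ and all $x \in X$, which is exactly the relative separation condition \eqref{eq:def_rel_sep}.

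The only subtle point, and the step I regard as the crux, is finding a lower bound on the ``local mass'' contributed by each $\lambda \in \Lambda$: neither a pointwise lower estimate on $|k(\lambda,\cdot)|$ nor a separation of the $\lambda$'s is available a priori. The trick is to integrate over an $r_0$-neighborhood of $\lambda$ and use WL with a threshold small relative to $C_1$, which converts the diagonal lower bound into an integral lower bound uniform in $\lambda$.
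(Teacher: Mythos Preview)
Your proof is correct and follows essentially the same approach as the paper: apply the Bessel bound to $f=k_y$, use Axiom (WL) together with the lower diagonal bound in Axiom (D) to get a uniform lower bound on $\int_{B_{r_0}(\lambda)}|k(\lambda,y)|^2\,d\mu(y)$, contain these balls in a larger ball around $x$, and finish with the local doubling of Lemma~\ref{lemma_loc_do}. The only cosmetic difference is that the paper takes $\rho$ itself large enough to serve as the (WL) radius and uses $B_\rho(\lambda)\subseteq B_{2\rho}(x)$ directly, whereas you keep the (WL) radius $r_0$ fixed and enlarge to $B_{\rho+r_0}(x)$ before doubling; this is the same argument with slightly different bookkeeping.
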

\begin{proof}
Let $\epsilon^2 := \tfrac{1}{2}\inf_{x\in X} k(x,x)$. By Axiom D \eqref{eq_AD}, $\epsilon>0$.
Select $r_1=r_1(\epsilon)$ according
to Axiom (WL). Since $\int _X |k(x,y)|^2 d\mu (y) = \|k_x\|^2 = k(x,x)$,
we obtain that, for $\rho \geq r_1$,
\begin{align}\label{eq:v5}
\int_{B_\rho(x)}\abs{k(x,y)}^2 d\mu (y) = k(x,x) -  \int_{X \setminus B_\rho (x)}\abs{k(x,y)}^2 d\mu (y) \geq \epsilon^2.
\end{align}
By hypothesis, there exists a constant $C>0$ such that for all $f \in
\cH$
\begin{align*}
\sum_{\lambda \in \Lambda} \abs{f(\lambda)}^2 \leq C \norm{f}^2 \, ,
\end{align*}
and this holds in particular for $f = k_y$.
Using Lemma \ref{lemma_loc_do}, select $r_0$ such that \eqref{eq_ld} holds for $r \geq r_0$. Let $x \in X$ and $\rho
\geq \rho_0 := \max\{r_0,r_1\}$. Using \eqref{eq:v5} and Axiom D \eqref{eq_AD}, we
estimate
\begin{align*}
\epsilon ^2  \# (\Lambda \cap B_{\rho}(x))
&\leq \sum_{\lambda \in \Lambda \cap B_{\rho}(x)} \int_{B_{\rho}(\lambda)}\abs{k(\lambda,y)}^2 d\mu (y)
\\
&\leq \sum_{\lambda \in \Lambda \cap B_{\rho}(x)} \int_{B_{2\rho}(x)}\abs{k(\lambda,y)}^2 d\mu (y)
\leq \int_{B_{2\rho}(x)}
\sum_{\lambda \in \Lambda} \abs{k_y(\lambda)}^2 d\mu (y)
\\
&\leq C  \,  \sup_{y\in X} \norm{k_y}^2 \mu(B_{2\rho}(x)) \leq C C_{\rho}
\mu(B_{\rho}(x)).
\end{align*}
Hence, \eqref{eq:def_rel_sep} holds.
\end{proof}
\begin{rem}
\label{rem_lemma_nohap}
{\normalfont
Note that the proof of Lemma \ref{lemma_sam_rel} does not use Axiom (HAP).
}
\end{rem}

We will  need the following elementary facts about  frames and Riesz
sequences  in a reproducing kernel Hilbert space $\cH$. They are
copied from ~\cite{NO12,GK15}.

\begin{lemma}\label{lem:RB-kernel}
The following properties hold.

\begin{itemize}[itemindent=0cm, leftmargin=1.3cm, itemsep=0.2cm, parsep=0cm]

\item[(i)] Assume that $\{ k_\lambda: \lambda\in \Lambda \}$ is a frame for $\cH$ with
canonical dual frame $\{g_\lambda: \lambda\in \Lambda \}$.
Then $k_\lambda $ and $g_\lambda$ satisfy the following:
\begin{align}
 & \sum_{\lambda \in \Lambda} k_\lambda(y)  \overline{g_\lambda(y)} =  k(y,y),
\qquad y \in X, \label{eq:c41} \\
&  \sup _{y\in X} \sum _{\lambda \in \Lambda } |g_\lambda (y)|^2 <\infty,
\qquad y \in X,\label{eq:c42}  \\
& \sup _{\lambda\in \Lambda} \norm{g_\lambda} = C <\infty, \label{eq:c43} \\
& \sup_{\lambda \in \Lambda}\abs{\langle k_\lambda, g_\lambda \rangle } \leq 1.  \label{eq:c44}
\end{align}

\item[(ii)]  If $\{ k_\lambda: \lambda\in \Lambda\}$ is a Riesz basis for a subspace
$V\subseteq \cH$ with biorthogonal basis  $\{g_\lambda: \lambda\in \Lambda\}\subseteq V$,
then \eqref{eq:c42}, \eqref{eq:c43} hold true, while
\eqref{eq:c41} is replaced by the inequality
\begin{equation}
  \label{eq:c45}
    0 \leq \sum_{\lambda \in \Lambda} k_\lambda(y) \overline{g_\lambda(y)} \leq   k(y,y),
\end{equation}
and in \eqref{eq:c44} holds the equality
\begin{align} \label{eq:c44a}
\langle g_\lambda, k_\lambda \rangle =1  \mbox{ for all }\lambda\in \Lambda.
\end{align}
\end{itemize}
\end{lemma}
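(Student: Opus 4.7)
The plan is to deduce both parts from standard frame/Riesz-basis identities combined with the reproducing property $f(y)=\langle f,k_y\rangle$. For part (i), let $S$ denote the frame operator of $\{k_\lambda\}$ and write the canonical dual as $g_\lambda = S^{-1}k_\lambda$, so that $f = \sum_\lambda \langle f, g_\lambda\rangle\, k_\lambda$ for every $f\in\cH$. Substituting $f=k_y$ gives $k_y = \sum_\lambda \overline{g_\lambda(y)}\,k_\lambda$, and evaluating at $y$ produces~\eqref{eq:c41}. The dual system $\{g_\lambda\}$ is itself a frame with upper bound $1/A$, so applied to $f=k_y$ we obtain $\sum_\lambda |g_\lambda(y)|^2 \leq (1/A)\,\|k_y\|^2 \leq C_2/A$ by Axiom~(D), which is~\eqref{eq:c42}; similarly $\|g_\lambda\| \leq \|S^{-1}\|\,\|k_\lambda\| \leq \sqrt{C_2}/A$ yields~\eqref{eq:c43}. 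For~\eqref{eq:c44} I would rewrite $\langle k_\lambda, g_\lambda\rangle = \langle k_\lambda, S^{-1}k_\lambda\rangle = \|S^{-1/2}k_\lambda\|^2$ and exploit that $\{S^{-1/2}k_\lambda\}$ is the associated canonical Parseval frame; the Parseval identity applied to $f = S^{-1/2}k_\lambda$ itself gives $\|S^{-1/2}k_\lambda\|^2 \geq \|S^{-1/2}k_\lambda\|^4$, hence $\|S^{-1/2}k_\lambda\|^2 \leq 1$.

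For part (ii), biorthogonality $\langle g_\lambda, k_\mu\rangle = \delta_{\lambda\mu}$ gives~\eqref{eq:c44a} immediately. Since $\{g_\lambda\}\subseteq V$ is again a Riesz basis for $V$ with reciprocal bounds, the Bessel inequality $\sum_\lambda |\langle h, g_\lambda\rangle|^2 \leq (1/A)\|h\|^2$ holds for all $h\in V$. Letting $P_V$ denote the orthogonal projection $\cH\to V$ and using $g_\lambda\in V$, one has $g_\lambda(y) = \langle g_\lambda, k_y\rangle = \langle g_\lambda, P_V k_y\rangle$, so~\eqref{eq:c42} and~\eqref{eq:c43} follow exactly as in part~(i) with $k_y$ replaced by $P_V k_y$. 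The key remaining step is~\eqref{eq:c45}: expanding $P_V k_y\in V$ in the Riesz basis $\{k_\lambda\}$ gives
\[
P_V k_y = \sum_\lambda \langle P_V k_y, g_\lambda\rangle\, k_\lambda = \sum_\lambda \overline{g_\lambda(y)}\, k_\lambda,
\]
and evaluating at $y$ yields $\sum_\lambda \overline{g_\lambda(y)}\,k_\lambda(y) = (P_V k_y)(y) = \langle P_V k_y, k_y\rangle = \|P_V k_y\|^2$, which is trivially non-negative and bounded above by $\|k_y\|^2 = k(y,y)$.

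The whole argument is essentially a translation of classical frame- and Riesz-basis identities into the RKHS setting, and no step is technically deep. The two places where a small trick is needed are~\eqref{eq:c44}, where the associated Parseval frame $\{S^{-1/2}k_\lambda\}$ must be invoked to obtain the sharp constant $1$, and~\eqref{eq:c45}, where the single identity $(P_V k_y)(y) = \|P_V k_y\|^2$ simultaneously delivers the positivity of the sum and its sharp upper bound $k(y,y)$.
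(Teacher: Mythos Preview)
Your proof is correct and follows essentially the same route as the paper: both derive \eqref{eq:c41} and \eqref{eq:c45} by recognizing $\sum_\lambda k_\lambda(y)\overline{g_\lambda(y)} = \langle P_V k_y, k_y\rangle$ (with $P_V = \mathrm{Id}$ in the frame case), and both obtain \eqref{eq:c42}, \eqref{eq:c43} from the Bessel bound of the dual system combined with Axiom~(D). The only noteworthy difference is the argument for \eqref{eq:c44}: the paper invokes the Duffin--Schaeffer minimality of the canonical frame coefficients, comparing $\sum_\mu |\langle k_{\lambda}, g_\mu\rangle|^2$ to the trivial representation $k_{\lambda} = 1\cdot k_{\lambda}$, whereas you use the Parseval property of $\{S^{-1/2}k_\lambda\}$ to obtain $\|S^{-1/2}k_\lambda\|^2 \geq \|S^{-1/2}k_\lambda\|^4$. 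Both are one-line arguments; yours is self-contained, while the paper's makes the link to the classical minimality principle explicit.
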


\begin{proof}
The proof from \cite{NO12,GK15} is included for completeness. Let
$P_V$ be the orthogonal projection on the subspace $V$ of $\cH $.
Inequality \eqref{eq:c45} follows from
 \begin{align*}
    \sum_{\lambda \in \Lambda} k_\lambda(y) \overline{g_\lambda(y)}
=& \sum_{\lambda \in \Lambda}\inprod{k_\lambda, k_y}\inprod{k_y, g_\lambda}\\
=& \biginprod{\sum_{\lambda \in \Lambda} \inprod{k_y, g_\lambda} k_\lambda, k_y}
= \inprod{P_V k_y,k_y} \\
&\leq \| k_y\|^2 = k(y,y) \,.
  \end{align*}
The proof of \eqref{eq:c41} is the same, except that $P_V =
\mathrm{I}$ for frames and thus equality holds in the last step.

Item \eqref{eq:c42} follows from
\[
\sum_{\lambda \in \Lambda}\abs{g_\lambda(y)}^2=\sum_{\lambda \in
  \Lambda}\abs{\inprod{g_\lambda, k_y}}^2\leq C\norm{k_y}^2 = C k(y,y)\,,
\]
where $C$ is the upper frame bound for $\set{g_\lambda \colon \lambda \in \Lambda}$,
and $k(y,y)$ is uniformly bounded by Axiom D \eqref{eq_AD}.

Item \eqref{eq:c44}
 is an immediate consequence of the minimality of the $\ell^2$-norms of the
 coefficients in the canonical  frame expansion~\cite{DS52}:
\[
k_{\lambda'}=\sum_{\lambda \in \Lambda}\inprod{k_{\lambda'},g_\lambda} k_\lambda  =
1 \cdot   k_{\lambda'} \quad \text{ for every } \lambda' \in \Lambda\,,
\]
so
\[
\abs{\inprod{k_{\lambda'},g_{\lambda'}}}^2 \leq \sum_{\lambda \in
  \Lambda}\abs{\inprod{k_{\lambda'},g_{\lambda}}}^2 \leq 1\quad \text{ for every
} \lambda' \in \Lambda\,.
\]
Finally \eqref{eq:c43} is a general fact about frames.
\end{proof}

\section{Proof of Theorem \ref{main}}
In this section we prove the necessary density conditions for sets of
sampling or interpolation in general \rkhs s satisfying the conditions
of  Section~\ref{sec_assumptions}.  Similar to~\cite{IK06,NO12}, our
proof is inspired by the method of Kolountzakis and
Lagarias~\cite{KL96}. It  is modeled on our
own version in~\cite{GK15}.

\begin{proof}
\textbf{Step 1.} The first part of the proof works both for sets of
sampling and for sets of interpolation. Equivalently, we assume that $\{k_{\lambda }
:\lambda \in \Lambda \}$ is either a frame with \emph{canonical dual
  frame} $\{ g_\lambda :\lambda \in \Lambda \}$ or that $\{k_{\lambda }
:\lambda \in \Lambda \}$ is a Riesz sequence for some subspace $V
\subseteq \cH $  with \emph{biorthogonal basis } $\{ g_\lambda :\lambda \in \Lambda
\}\subseteq V$. By Lemma \ref{lemma_sam_rel}, the set $\Lambda$ is
relatively separated. Let $\rho>0$ be a suitably large radius, such that the conclusion of Lemma \ref{annsep} holds.

In both cases we estimate the quantity
\begin{equation}
  \label{eq:6}
  \int_{B_r(x)} \sum_{\lambda  \in \Lambda } k_{\lambda}(y)
    \overline{g_\lambda (y)}   d\mu (y)  \,
\end{equation}
for large $r$.

Fix $\epsilon >0$ and $x\in X$,  and choose $R=R(\epsilon )$ such that both kernel axioms
WL~\eqref{eq:hapcont} and HAP~\eqref{eq:hapclass} are satisfied.
In the proof, we will just write $B_r$ for the ball $B_r(x)$ to abbreviate the notation.

We partition $\Lambda $ and write accordingly
\begin{align*}
\sum_{\lambda \in \Lambda } k_\lambda(y)
\overline{g_\lambda(y)} & =\Big(\sum_{\lambda \in \Lambda \cap B_{r-R}
}+\sum_{\lambda \in \Lambda \cap (X \setminus  B_{r+R} )}+\sum_{\lambda
  \in \Lambda  \cap(B_{r+R} \setminus B_{r-R})} \Big) k_\lambda(y)
\overline{g_\lambda (y)}\\
 & = A_1(y)+A_2(y)+ A_3(y) \, .
\end{align*}

\emph{Estimate of $\int _{B_r} A_1$.}
We estimate $\abs{\int_{B_r} A_1(y) d\mu (y)}$. We write
$$
\int_{B_r} A_1(y) d\mu (y) = \int _X \sum_{\lambda \in \Lambda \cap B_{r-R} } k_\lambda(y)
\overline{g_\lambda(y)}  d\mu (y) - \int _{X\setminus B_r} \sum_{\lambda \in \Lambda \cap
B_{r-R}
}  k_\lambda(y)
\overline{g_\lambda(y)}  d\mu (y) \, ,
$$
 and set
$$ L= \sum_{\lambda \in \Lambda \cap B_{r-R}
}  \int _{X\setminus B_r}  k_\lambda(y) \overline{g_\lambda(y)}  d\mu (y) \, .
$$
Then
\begin{equation}
  \label{eq:c10}
\int_{B_r} A_1(y) d\mu (y)  = \sum_{\lambda \in \Lambda \cap B_{r-R}
}  \langle k_\lambda, g_\lambda \rangle  - L \, .
\end{equation}
If  $\lambda \in \Lambda \cap B_{r-R}$ and $y \in X \setminus B_r$, then
$d(\lambda , y) > R$. Therefore the kernel axiom
WL~\eqref{eq:hapclass} and (\ref{eq:c43}) imply that  a single term
contributing to $L$ is majorized by
\begin{align} \label{eq:a0}
\bigabs{\int_{X \setminus B_r }  k_\lambda(y) \overline{g_\lambda(y) }d\mu (y)}  \leq
\Big( \int _{X \setminus B_R (\lambda) } |k_\lambda(y)|^2 \, d\mu (y) \Big)^{1/2}
\|g_\lambda\| \leq \epsilon C'' \, .
  \end{align}
This estimate  implies
\begin{align} \label{eq:a1}
|L| \leq \epsilon \, C_1 \, \# (\Lambda \cap B_{r-R}) \leq \epsilon \, C_1 \,
\# (\Lambda \cap B_r) \, .
\end{align}

\emph{Estimate of $\int_{B_r} A_2$.}  Note that $y\in B_r$ and $\lambda \in
\Lambda
\setminus B_{r+R}$ implies that $d(\lambda ,y)
>R$. Then  Axiom~HAP~\eqref{eq:hapclass} ensures
that $\sum _{\lambda \in \Lambda \setminus B_{r+R}} |k(y, \lambda)|^2
\leq \sum _{\lambda \in \Lambda \setminus B_R(y)} |k( y, \lambda )|^2 < \epsilon^2$. Consequently, using also
\eqref{eq:c42}, we obtain
\begin{align}  \label{eq:a2}
  \bigabs{\int_{B_r} A_2(y)d\mu( y) } &\leq \int_{B_r} \Big(\sum_{\lambda \in
    \Lambda  \cap(
    X \setminus B_{r+R})}\abs{ k_\lambda(y) }^2\Big)^{1/2}\Big(
  \sum_{\lambda  \in     \Lambda }\abs{g_\lambda (y)}^2\Big)^{1/2} d\mu (y)
 \leq \epsilon C_2 \mu (B_r)  \, .
\end{align}

\emph{Estimate of $\int_{B_r} A_3$.}
For the third term observe that
\begin{align}
\int_{B_r} \abs{A_3(y)} d \mu (y)
&\leq \sum_{\lambda \in \Lambda \cap( B_{r+R}\setminus B_{r-R})}
\int_X \abs{ k_\lambda(y) }\, \abs{g_\lambda(y)}d\mu (y) \notag  \\
&\leq  \sum_{\lambda \in \Lambda \cap( B_{r+R}\setminus B_{r-R})} \norm { k_\lambda }
\norm{g_\lambda}  \, .
\end{align}
Using Axiom D \eqref{eq_AD}, and the boundedness of
the canonical dual frame~\eqref{eq:c43}, we obtain
\begin{equation}
  \label{eq:a3}
\int_{B_r} \abs{A_3(y)} \, d\mu (y) \leq  C_3 \# (\Lambda \cap (B_{r+R} \setminus
B_{r-R})).
  \end{equation}

From now on we distinguish the case of sets of sampling from sets of
interpolation.

\textbf{Step 2.} Assume first that $\{ k_\lambda : \lambda \in
\Lambda \}$ is a Riesz sequence in $\cH $.
We rewrite  the expansion
\begin{align*}  \int _{B_r} \sum _{\lambda \in \Lambda } k_\lambda(y)
  \overline{g_\lambda (y)} \, d\mu (y)  = \int_{B_r} \sum _{j=1}^3 A_j(y)
  \, d\mu (y) \, ,
\end{align*}
and, with the help of \eqref{eq:c10}, \eqref{eq:c45}, and (\ref{eq:c44a}), we   obtain
the estimate
\begin{align*}
  \# (\Lambda \cap B_{r-R}) &= \sum _{\lambda \in \Lambda \cap B_{r-R}}
  \langle k_\lambda, g_\lambda \rangle \\
&= \int _{B_r} A_1(y) \, d\mu (y) + L \\
&= \int _{B_r} \sum _{\lambda \in \Lambda } k_\lambda(y)
  \overline{g_\lambda (y)} \, d\mu (y) - \int _{B_r} A_2(y) d\mu (y) -
   \int _{B_r} A_3(y) d\mu (y) + L \\
&\leq \int_{B_r} k(y,y) \, d\mu (y)  + \Big| \int _{B_r} A_2(y) d\mu (y)
\Big| + \Big|  \int _{B_r} A_3(y) d\mu (y) \Big| + |L| \, .
\end{align*}

Using $\# (\Lambda \cap B_r) = \# (\Lambda \cap B_{r-R}) \, + \,  \#
(\Lambda \cap
(B_r \setminus B_{r-R})) $ and the estimates for  $\int _{B_r} A_j (y) d\mu (y)$
(see \eqref{eq:a2}, \eqref{eq:a3} and \eqref{eq:a1}), we obtain that
\begin{multline}
\# (\Lambda \cap B_r) \leq \int _{B_r} k(y,y) \, d\mu (y) +\epsilon C_2 \mu
(B_r) + C_3 \, \# (\Lambda \cap  (B_{r+R} \setminus B_{r-R})) \\ + \epsilon C_1 \# (\Lambda \cap B_r)
+ \# (\Lambda \cap  (B_r \setminus B_{r-R})).
\end{multline}
Lemma~\ref{lem:relsep} bounds the last term by
$$
\# (\Lambda \cap  (B_r \setminus B_{r-R}))  \leq \# (\Lambda \cap  (B_{r+R}
\setminus B_{r-R})) \leq C_{\rho,\Lambda} \,  \mu  (B_{r+R+\rho}\setminus
B_{r-R-\rho}) \, ,
$$
so we conclude that
\begin{equation}\label{eq:8}
(1-\epsilon C_1)
\frac{\# (\Lambda \cap B_r)}{\mu (B_r)} \leq \frac{1}{\mu (B_r)} \int _{B_r}
k(y,y) \, d\mu (y) +\epsilon C_2 + (1+C_3) C_{\rho,\Lambda} \frac{ \mu
  (B_{r+R+\rho}\setminus  B_{r-R-\rho})}{\mu(B_{r})}.
\end{equation}

We recall that $B_r = B_r(x)$,  take the supremum over all
$x\in X$, let $r$
tend to $\infty$,  and use Lemma \ref{lemma_annular_decay} to deduce
\begin{align*}
  (1-\epsilon C_1) D^+ (\Lambda ) & \leq \mathrm{tr}^+(k) + \epsilon C_2 + (1+C_3)C_{\rho,\Lambda} \limsup_{r \rightarrow \infty} \sup_{x\in X}
\frac{\mu (B_{r+R+\rho}(x)\setminus  B_{r-R-\rho}(x))}{\mu(B_{r}(x))}
\\
&= \mathrm{tr}^+(k) + \epsilon C _2.
\end{align*}

Since $\epsilon>0$ is arbitrary, it follows that $D^+(\Lambda ) \leq
\mathrm{tr}^+(k)$ for every interpolating set $\Lambda $. The inequality
$D^-(\Lambda ) \leq \mathrm{tr}^-(k)$ follows from \eqref{eq:8} in a similar way,
just taking $\inf$ instead of $\sup$ and $\liminf$ instead of $\limsup$.

\textbf{Step 3.} Assume next  that $\{ k_\lambda : \lambda \in
\Lambda \}$ is a frame  for  $\cH $. Then by
Lemma~\ref{lem:RB-kernel}, \eqref{eq:c41} and \eqref{eq:c44},  we
have
\[
\sum _{\lambda \in \Lambda } k_\lambda(y) \overline{g_\lambda (y)}  = k(y,y)
\]
and
\[
\abs{\langle k_\lambda, g_\lambda \rangle} \leq  1 \, .
\]

Proceeding as in Step~2 we obtain with $x \in X$ fixed and $B_r =
B_r(x)$ that
\begin{align*}
&\int _{B_r} k(y,y) d\mu (y) = \int _{B_r} \sum _{\lambda \in \Lambda }
k_\lambda(y) \overline{g_\lambda (y)} \,d\mu (y)
= \int _{B_r}  \sum _{j=1}^3 A_j(y)
  \, d\mu (y) \\\
&\qquad = \sum_{\lambda \in \Lambda \cap B_{r-R}}  \langle k_\lambda , g_\lambda \rangle  - L  + \int _{B_r}
A_2(y) \, d\mu (y) + \int _{B_r} A_3(y) \, d\mu (y) \\
 &\qquad\leq  \# (\Lambda \cap B_{r-R}) + \epsilon C_1 \, \# (\Lambda \cap B_r) +
\epsilon C_2 \mu (B_r) + C_3 \, \# (\Lambda \cap (B_{r+R} \setminus B_{r-R}))
 \\
&\qquad \leq  (1+\epsilon C_1) \# (\Lambda \cap B_r)+
\epsilon C_2 \mu (B_r) + C_3 C_{\rho ,\Lambda } \, \mu
(B_{r+R+\rho} \setminus B_{r-R-\rho}) \, .
\end{align*}
Consequently
\begin{align} \label{eq:e5}
\frac{1}{\mu(B_r)}\int _{B_r} k(y,y) d\mu (y) \leq
(1+ \epsilon C_1 )  \frac{\# (\Lambda \cap B_r)}{\mu (B_r)} +
\epsilon C_2 + C_3   C_{\rho ,\Lambda } \,\frac{ \mu
(B_{r+R+\rho} \setminus B_{r-R-\rho}) }{\mu
  (B_r)} \, .
\end{align}

Again, we  take the infimum over all $x\in X$ and let $r$
tend to $\infty$ to obtain via Lemmas \ref{lem:relsep} and \ref{lemma_annular_decay}
$$
\mathrm{tr}^- (k) \leq (1 + \epsilon C_1 ) D^-(\Lambda ) + \epsilon C_2 \, .
$$
Since $\epsilon >0$ was arbitrary, the necessary density is
$D^-(\Lambda ) \geq \mathrm{tr}^- (k)$, as claimed. As in Step 2, the statement
involving the upper trace and density follows by
just taking $\sup$ instead of $\inf$ and $\limsup$ instead of $\liminf$.
\end{proof}

By drawing a different  conclusion at the end of the above proof, the density
theorem can be given a dimension-free form as  suggested to us by
J.~Ortega-Cerd\`a.

\begin{cor}
  Impose the same assumption on $(X,d,\mu)$ and $\cH $ as in
  Theorem~\ref{main}.

(i) If $\Lambda \subseteq X $ is a set of stable sampling for $\cH $,
then
\begin{equation}
  \label{eq:rev1}
  \liminf _{r\to \infty } \inf _{x\in X} \frac{\# (\Lambda
      \cap B_r(x))}{\int _{ B_r(x)} k(y,y) \, d\mu (y)}  \geq 1 \, .
\end{equation}

(ii) If $\Lambda \subseteq X $ is a set of interpolation  for $\cH $,
then
\begin{equation}
  \label{eq:rev2}
  \limsup _{r\to \infty } \sup _{x\in X} \frac{\# (\Lambda
      \cap B_r(x))}{\int _{ B_r(x)} k(y,y) \, d\mu (y)}  \leq 1 \, .
\end{equation}
\end{cor}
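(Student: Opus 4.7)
The plan is to reuse the estimates from the proof of Theorem \ref{main}, changing only the final normalization: instead of dividing the master inequalities by $\mu(B_r(x))$, I will divide by $\int_{B_r(x)} k(y,y)\, d\mu(y)$. The axiom that makes this transparent is Axiom~(D), which gives $C_1 \mu(B_r(x)) \leq \int_{B_r(x)} k(y,y)\, d\mu(y) \leq C_2 \mu(B_r(x))$ for every $x\in X$ and $r>0$, so the two normalizations are uniformly comparable. The new feature is that, after this renormalization, the leading term on the other side becomes the constant $1$ instead of $\mathrm{tr}^\pm(k)$, which is precisely what the corollary asks for.

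For part (i), I pick up the proof of Theorem \ref{main} at the inequality immediately preceding \eqref{eq:e5}, namely
\[
\int_{B_r(x)} k(y,y)\, d\mu(y) \leq (1+\epsilon C_1)\, \#(\Lambda \cap B_r(x)) + \epsilon C_2\, \mu(B_r(x)) + C_3 C_{\rho,\Lambda}\, \mu(B_{r+R+\rho}(x)\setminus B_{r-R-\rho}(x)),
\]
and divide both sides by $\int_{B_r(x)} k(y,y)\, d\mu(y)$. The lower bound $\int_{B_r(x)} k(y,y)\, d\mu(y) \geq C_1\mu(B_r(x))$ from Axiom~(D) dominates the $\epsilon C_2\,\mu(B_r(x))$ term by $\epsilon C_2/C_1$ and turns the annular remainder into $(C_3 C_{\rho,\Lambda}/C_1)\,\mu(B_{r+R+\rho}(x)\setminus B_{r-R-\rho}(x))/\mu(B_r(x))$, which vanishes uniformly in $x$ as $r\to\infty$ by Lemma \ref{lemma_annular_decay}. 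Taking $\inf_{x\in X}$ and then $\liminf_{r\to\infty}$ I obtain
\[
1 \leq (1+\epsilon C_1)\, \liminf_{r\to\infty}\inf_{x\in X}\frac{\#(\Lambda \cap B_r(x))}{\int_{B_r(x)} k(y,y)\, d\mu(y)} + \frac{\epsilon C_2}{C_1},
\]
and letting $\epsilon\to 0$ gives \eqref{eq:rev1}. Part (ii) is obtained by applying the same device to the interpolation-side inequality established in Step~2 of the proof of Theorem \ref{main} (the unnormalized form of \eqref{eq:8}), $(1-\epsilon C_1)\,\#(\Lambda\cap B_r(x)) \leq \int_{B_r(x)} k(y,y)\, d\mu(y) + \epsilon C_2\, \mu(B_r(x)) + (1+C_3) C_{\rho,\Lambda}\, \mu(B_{r+R+\rho}(x)\setminus B_{r-R-\rho}(x))$, dividing by the same integral, then taking $\sup_x$ and $\limsup_r$, and finally sending $\epsilon\to 0$.

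There is essentially no new obstacle: all serious technical work---off-diagonal kernel control through Axioms (WL) and (HAP), counting over annuli via Lemma \ref{lem:relsep}, and the vanishing of normalized annular shells via Lemma \ref{lemma_annular_decay}---has already been carried out in Theorem \ref{main}. The only point requiring attention is that the error terms continue to tend to zero after the renormalization, and this is precisely what the two-sided estimate in Axiom~(D) provides. The corollary is therefore best viewed as a reformulation of Theorem \ref{main} in which the Nyquist rate $\mathrm{tr}^\pm(k)$ is absorbed into the normalization itself.
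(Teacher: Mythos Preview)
Your proposal is correct and follows essentially the same route as the paper: both take the unnormalized master inequalities from the proof of Theorem~\ref{main}, divide by $\int_{B_r(x)} k(y,y)\,d\mu(y)$ instead of $\mu(B_r(x))$, and use the lower bound in Axiom~(D) to control the remaining error terms before letting $r\to\infty$ and $\epsilon\to 0$. The paper in fact only writes out part~(i) and declares (ii) ``similar,'' so your explicit treatment of the interpolation side is a harmless elaboration of the same argument.
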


\begin{proof}
 We only prove (i), as (ii) is similar. Dividing~\eqref{eq:e5} yields
\begin{align*}
   1 \leq
(1+ \epsilon C_1 )  \frac{\# (\Lambda \cap B_r)}{\int _{B_r} k(y,y) d\mu (y) } +
\epsilon C_2 \frac{\mu (B_r)}{\int _{B_r} k(y,y) d\mu (y)} + C_3   C_{\rho ,\Lambda } \,\frac{ \mu
(B_{r+R+\rho} \setminus B_{r-R-\rho}) }{\int _{B_r} k(y,y) d\mu (y) }
\, .
\end{align*}
Since $\int _{B_r} k(y,y) d\mu (y) \geq C_1 \mu(B_r)$ by
\eqref{eq_AD}, the second term on the right-hand side  is of order
$\epsilon $, and the third term tends to $0$ for $r\to \infty $.
Taking  the infimum over all $x\in X$ and letting $r$
tend to $\infty$, we obtain
$$
1\leq (1+C_1\epsilon )   \liminf _{r\to \infty } \inf _{x\in X} \frac{\# (\Lambda
      \cap B_r(x))}{\int _{ (B_r(x))} k(y,y) \, d\mu (y)}  + \epsilon
    C' \, ,
$$
which yields assertion (i).
\end{proof}
This corollary suggests that one could define the modified  Beurling
density of a set $\Lambda $ by \eqref{eq:rev1} and
\eqref{eq:rev2}. The corresponding density theorem is then
dimension-free with  critical density  $1$ independent
of the geometry of the \rkhs . By contrast, the critical density in
Theorem~\ref{main} depends on the reproducing kernel.

\begin{rem}
{
\normalfont
Instead of the Beurling densities one may
also apply an ultra-filter to \eqref{eq:8} and \eqref{eq:e5} and
obtain a density theorem with respect to a so-called frame measure function.
See~\cite{BCHL06,bala07} for the notion of frame measure function
and its applications to the comparison of frames.
}
\end{rem}

\begin{rem}
\label{rem_nohap}

Theorem \ref{main} (ii) is valid without axiom (HAP).

{\normalfont
Our proof of the density theorem emphasized the symmetry between
sampling and interpolation.
We have seen that the \emph{same} estimates are used in both density theorems. If we give up this symmetry, we can
streamline the proof of the interpolation part a bit, and
deduce the density conditions without assuming the kernel axiom (HAP).\\

Indeed, with the notation of the preceding proof let
$V_{B_r}= \operatorname{span}\set{k_\lambda \colon \lambda \in \Lambda \cap B_r }$
and $P$ the orthogonal projection onto $V_{B_r}$. The (unique) biorthogonal basis in $V_{B_r}$ is
$\set{P{g_\mu} \colon g_\mu \in \Lambda \cap B_r}$,
because
$\inprod{k_\lambda,P{g_\mu}}= \inprod{k_\lambda,{g_\mu}}=\delta_{\lambda,\mu}$
for
$\lambda, \mu \in \Lambda \cap B_r$. Choose $R = R(\epsilon )$ so that
that axiom (WL) is satisfied. Then
\[
\#(\Lambda \cap B_r)=\sum_{\lambda \in \Lambda \cap B_r}\inprod{k_\lambda,P{g_\lambda}}
 =\Big(\int_{B_{r+R}} +\int_{X \setminus B_{r+R}}\Big) \Big(\sum_{\lambda \in \Lambda \cap B_r}k_\lambda(y)
\overline{P{g_\lambda}(y)}\Big)
d\mu(y) = I + \tilde L \,.
\]
By \eqref{eq:a0} and \eqref{eq:a1} we obtain
$\abs{\tilde L} \leq \epsilon C_1 \#(\Lambda \cap B_r)$,
whereas \eqref{eq:c45} yields
\[
I = \int_{B_{r+R}} k(y,y) d\mu(y) \leq   \int_{B_{r}} k(y,y) d\mu(y) +
\sup _{x\in X} k(y,y) \mu \big(B_{r+R}\setminus B_r\big) \, .
\]
Consequently,
\[
\#(\Lambda \cap B_r) \leq \int_{B_r} k(y,y) d\mu(y)+ C_0 \mu \big(B_{r+R}\setminus B_r\big) + \epsilon C_1 \#(\Lambda \cap B_r)\,,
\]
which readily yields
\[
D^{\pm}(\Lambda) \leq \operatorname{tr}^{\pm}(k) \,.
\]
}
\end{rem}

\subsection{Off-diagonal decay with respect to a metric}
In applications, the reproducing kernel often possesses some off-diagonal decay. In this case the kernel
axioms are easier to check. The following proposition shows that Theorem \ref{tmintro} is a special case of Theorem
\ref{main}.

\begin{proof}[Proof of Theorem \ref{tmintro}]
We show that the hypothesis of Section \ref{sec_assumptions} are satisfied.
The assumption on $d$ and $k$ clearly implies the weak localization condition (WL)
and the diagonal condition (D). It only remains to check the
homogeneous approximation property (HAP). Assume that $\Lambda \subseteq X$ is such that
$\{k_\lambda: \lambda \in \Lambda\}$ is a Bessel sequence.

As noted in Remark \ref{rem_lemma_nohap}, the proof of Lemma \ref{lemma_sam_rel} does
not depend on Axiom (HAP). Hence we can invoke that lemma to obtain $\rho>0$ such that \eqref{eq:def_rel_sep} holds. Similarly, we can invoke Lemma \ref{lemma_loc_do} - which, as noted in Remark \ref{rem_ld}, only depends on Axiom (WAD) - to further grant that
\begin{align}
\label{eq_b}
\mu(B_{2\rho}(x)) \leq C_\rho \mu(B_\rho(x)), \qquad x \in X.
\end{align}

We observe first that the obvious inequality
\[
1+ d(x,y) \leq (1+ d(x,\lambda))(1+ d(\lambda,y)) \quad \text{for all}\quad x,y, \lambda \in X
\]
implies that
\[
(1+ d(x,\lambda))^{-2 \sigma} \leq (1+ d(\lambda,y))^{2 \sigma} (1+ d(x,y))^{-2 \sigma}\,.
\]
Therefore,
\begin{align*}
  \abs{k(x,\lambda)}^2
\leq C (1+ d(x,\lambda))^{- 2 \sigma}
&= \frac {C} {\mu (B_\rho(\lambda))}\int_{B_\rho(\lambda)} (1+ d(x,\lambda))^{- 2 \sigma} d\mu(y)\\
&\leq \frac {C} {\mu (B_\rho(\lambda))}\int_{B_\rho(\lambda)}
                \frac{(1+ d(\lambda,y))^{ 2 \sigma}}{(1+ d(x,y))^{ 2 \sigma}} d\mu(y) \\
&\leq C\frac {(1+\rho)^{2 \sigma}} {\mu (B_\rho(\lambda))}\int_{B_\rho(\lambda)}
                {(1+ d(x,y))^{ -2 \sigma}} d\mu(y) \,.
\end{align*}
Consequently,
\begin{align*}
&\sum_{\lambda \in \Lambda \cap (X\setminus B_r(x))} \abs{k(x,\lambda)}^2 \\
&\qquad \leq C(1+ \rho)^{2 \sigma} \int_X
\Big(\sum_{\lambda \in \Lambda \setminus B_r(x)}{\mu (B_\rho(\lambda))}^{-1}
\ind_{B_\rho(\lambda)}(y) \Big)
(1+d(x,y))^{- 2 \sigma} d\mu (y) \,.
\end{align*}
We note that the sum vanishes if $d(x,y) \leq r- \rho$, thus the integral can be taken  over the set $X \setminus
B_{r-\rho}(x)$. Next we estimate the sum for fixed $y \in X \setminus B_{r-\rho}(x)$. Note that
if $y \in B_\rho(\lambda)$, then $B_\rho(y) \subseteq B_{2 \rho} (\lambda)$, and,
by \eqref{eq_b},
\begin{align*}
\mu(B_\rho(\lambda)) \geq C_\rho^{-1}
\mu(B_{2\rho}(\lambda)) \geq C_\rho^{-1} \mu(B_\rho(y)).
\end{align*}
Hence, using \eqref{eq:def_rel_sep}, we can estimate
\begin{align*}
\sum _{\lambda \in \Lambda \setminus B_r(x)}\frac {1} {\mu (B_\rho(\lambda))}
\ind_{B_\rho(\lambda)}(y)
&\leq \frac{C_\rho}{\mu(B_\rho(y))} \sum_{\lambda \in \Lambda} \ind_{B_\rho(y)}(\lambda)
\leq C C_\rho.
\end{align*}
In conclusion,
\begin{align*}
   \sum_{\lambda \in \Lambda \setminus  B_r(x)} \abs{k(x,\lambda)}^2
\leq C C_\rho (1+ \rho)^{2 \sigma} \int_{X \setminus B_{r-\rho}(x)}
(1+d(x,y))^{- 2 \sigma} d\mu (y) \,.
\end{align*}
By hypothesis, this expression tends to zero uniformly in $x$ as $r \to \infty$, whence $k$ satisfies (HAP).
\end{proof}

\section{Examples}
\label{sec_apps}

In this section we discuss several examples of density theorems from
different  areas of analysis. Our point is to show that some of the
fundamental density theorems in signal analysis, complex analysis,
frame theory, and harmonic analysis follow from the axiomatic
approach. All we have to do is to check the general conditions of
Section~\ref{sec_assumptions} and formulate the corresponding theorem. This is not
always easy, and our discussion will point out some of the
difficulties and pitfalls.

\subsection{Bandlimited Functions}

 Let $\Omega\subseteq \rd $ be  measurable with finite
Lebesgue measure and $B_\Omega = \{ f\in \lrd : \supp \hat{f}
\subseteq \Omega \}$ be the corresponding Paley-Wiener space.  As
observed in the introduction, its
reproducing kernel is
$$
k(x,y) = \int _\Omega e^{-2\pi i \xi \cdot (y - x)} \, d\xi \, .
$$
Clearly $X= \rd $ with the Euclidean distance and Lebesgue measure
$d\mu (x)  = dx$  satisfy  the geometrical  assumptions. As for the
kernel, we have
$k(x,x) = |\Omega |$ and thus the averaged trace is obviously  $\mathrm{tr}^+ (k) = \mathrm{tr}^- (k) =
\mu (\Omega)$.

The verification of the weak localization of
the kernel is easy, because $k(x,y) = \widehat{1 _\Omega }(y-x)$ where
$\widehat{1 _\Omega } $ is the Fourier transform of an $L^2$-function. Therefore
$$ \int _{\rd \setminus B_r(x)} |\widehat{\ind_\Omega } (y-x)|^2 \, dy = \int _{|y|
  \geq r} |\widehat{\ind_\Omega } (y)|^2 \, dy < \epsilon ^2
$$
for suitably large $r$. The axiom (HAP) is more subtle. In fact, it
holds for bounded spectrum $\Omega $, where it is a consequence of
the Plancherel-Polya inequality for entire functions of exponential
growth~\cite{triebel83}. However, one can show that (HAP)  fails for
unbounded spectra, therefore Theorem~\ref{main} is not directly
applicable. In this case one applies the sampling part of the density theorem to the
subspace $B_{\Omega \cap B_R(0)} \subseteq B_\Omega$ and then takes
the limit $R\to \infty $. For the interpolation part, we do not need
axiom (HAP) - cf. Remark \ref{rem_nohap} - so there is no difficulty for unbounded spectrum $\Omega
$, see~\cite{NO12} for the details.

To summarize, Theorem~\ref{main} implies Landau's fundamental density
theorem for bandlimited functions. The geometric properties and all
but one property of the kernel are obvious, but the homogeneous
approximation property requires some mathematical arguments.

\subsection{Functions of Variable Bandwidth}

Next we consider the spectral subspaces of the Schr\"odinger operator
$D_q f = -\frac{1}{4 \pi^2} f'' + q f$ in dimension $1$ with a compactly supported
potential $q \in C^2$. Let $\Omega \subseteq \bR ^+$ be a bounded set
and let $PW_\Omega (D_q)$ be the spectral subspace
corresponding to  spectrum $\Omega $. If $q \equiv
0$, then $D_0= - \frac{1}{4 \pi^2}\frac{d^2}{dx^2}$ is diagonalized by the Fourier
transform $\cF$ so that $\cF   D_0 \cF ^{-1} f (\xi )=  \xi ^2
\hat{f}(\xi ) $ is  the  operator of multiplication by
$ \xi ^2$. For the spectral subspace $PW_\Omega (D_0) $ only the
spectral values $\xi ^2 \in \Omega $ are relevant, therefore
$PW_\Omega (D_0) = \{ f \in L^2(\bR) : \mathrm{supp} \widehat{f}
\subseteq \Omega ^{1/2} \} = B_{\Omega ^{1/2}}$ is  the  Paley-Wiener space of
bandlimited functions with spectrum in $\Omega ^{1/2} = \{ \xi \in \bR
: \xi ^2 \in \Omega \}$.  One can show that  $PW_\Omega (D_q)$ is a
\rkhs .

If $q \not \equiv 0$, then we may consider $PW_\Omega (D_q)$ as a
perturbation of the Paley-Wiener space. It is therefore natural to
expect that the same density conditions for sampling and interpolation
also hold  for $PW_\Omega (D_q)$.  Indeed, in~\cite{GK15} we
proved the following result.

  \begin{tm}\label{tm-vbw}
  Assume that $\Omega \subseteq \bR ^+$ is a bounded set with
  positive (Lebesgue) measure.
\begin{itemize}[itemindent=0cm, leftmargin=1cm, itemsep=0.2cm, parsep=0cm]
\item[(i)]    If $\Lambda $ is a set of sampling for  $PW_\Omega (D_q)$, then
$D^-(\Lambda ) \geq \mu (\Omega ^{1/2})$.

\item[(ii)]   If $\Lambda $ is a set of interpolation for  $PW_\Omega (D_q)$, then
$D^+(\Lambda ) \leq \mu (\Omega ^{1/2}) $.
\end{itemize}
\end{tm}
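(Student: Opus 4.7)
The plan is to derive Theorem \ref{tm-vbw} as a direct application of Theorem \ref{main} to the \rkhs\ $\cH = PW_\Omega(D_q)$ with $X = \bR$, Euclidean metric, and Lebesgue measure. Since the geometric axioms (NDB) and (WAD) hold trivially on $(\bR,|\cdot|,dx)$, the task reduces to identifying the reproducing kernel $k_q$ of $PW_\Omega(D_q)$, verifying the kernel axioms (D), (WL), (HAP), and computing $\mathrm{tr}^{\pm}(k_q) = \mu(\Omega^{1/2})$.

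First I would construct $k_q$ via scattering theory. Because $q \in C^2$ has compact support, $D_q$ is a short-range perturbation of $D_0 = -\tfrac{1}{4\pi^2}\tfrac{d^2}{dx^2}$, and the spectral theorem supplies two families of Jost-type generalized eigenfunctions $\phi_\xi^\pm$ with $D_q \phi_\xi^\pm = \xi^2 \phi_\xi^\pm$ that asymptote to the plane waves $e^{\pm 2\pi i \xi x}$ outside $\supp q$. The associated generalized Fourier transform diagonalizes $D_q$ and yields
$$
k_q(x,y) = \int_{\sqrt{\Omega}} \bigl(\phi_\xi^+(x)\overline{\phi_\xi^+(y)} + \phi_\xi^-(x)\overline{\phi_\xi^-(y)}\bigr)\, d\xi,
$$
the direct analogue of the free kernel $k_0(x,y) = \int_{\Omega^{1/2}} e^{2\pi i \xi (x-y)}\, d\xi$.

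Next I would verify the kernel axioms. The upper bound in (D) follows from the uniform boundedness of $\phi_\xi^\pm$ on $\bR \times \sqrt{\Omega}$: plane-wave behavior outside $\supp q$ and bounded ODE solutions inside, both uniform because $\sqrt{\Omega}$ is compact. The lower bound in (D) is obtained through Lemma \ref{obs_k_bb} by constructing, for each $x$, a smoothly truncated test function $f_x \in PW_\Omega(D_q)$ with $f_x(x) = 1$ and $\|f_x\|$ uniformly bounded in $x$. For (WL), the difference $k_q(x,y) - k_0(x,y)$ is expressible through oscillatory integrals involving reflection and transmission coefficients that decay in $|x-y|$, while $k_0(x,\cdot) \in L^2(\bR)$ absorbs the free part of the tail. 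For (HAP), since $\sqrt{\Omega}$ is bounded, elements of $PW_\Omega(D_q)$ satisfy a Plancherel--P\'olya-type estimate derived from Sobolev embedding together with the free-wave asymptotics; alternatively, following the bandlimited discussion, the interpolation statement bypasses (HAP) by Remark \ref{rem_nohap}, and the sampling statement can be reduced to a truncated spectrum.

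Finally I would compute the averaged trace. For $|y|$ outside $\supp q$, unitarity of the scattering matrix gives
$$
|\phi_\xi^+(y)|^2 + |\phi_\xi^-(y)|^2 = 2 + 2\,\mathrm{Re}\bigl(\overline{R(\xi)}\, e^{\mp 4\pi i \xi y}\bigr),
$$
where $R$ is the reflection coefficient, so $k_q(y,y) = \mu(\Omega^{1/2})$ plus an oscillatory correction of zero mean. Averaging over $[x-r,x+r]$ as $r \to \infty$ yields $\mathrm{tr}^{\pm}(k_q) = \mu(\Omega^{1/2})$ uniformly in $x$, and Theorem \ref{main} delivers both conclusions (i) and (ii). The main obstacle is precisely this scattering-theoretic control of $k_q$: uniform-in-$\xi$ Jost-function estimates and sharp off-diagonal decay constitute the technical heart of the argument, even though the free case already dictates the correct limiting value.
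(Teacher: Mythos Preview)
Your proposal is correct and follows essentially the same approach as the paper: the paper does not give a self-contained proof of Theorem~\ref{tm-vbw} but explains that it is derived from Theorem~\ref{main} by using the scattering theory of one-dimensional Schr\"odinger operators to verify the kernel axioms (WL) and (HAP) and to compute the averaged trace, referring to \cite{GK15} for the technical details. Your outline via Jost solutions, reflection/transmission coefficients, and unitarity of the scattering matrix is precisely the route indicated there; the paper also notes, as you do, that these scattering-theoretic kernel estimates are the genuinely hard part of the argument.
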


Whereas this result is expected, its proof is surprisingly difficult.
In contrast to the Paley-Wiener space $B_\Omega = PW_\Omega (D_0)$,   the
reproducing kernel for  $PW_\Omega (D_q)$ is not known explicitly. To
derive  Theorem~\ref{tm-vbw},  we had to
use the fine details of the  scattering theory for one-dimensional Schr\"odinger
operators  for the  verification of  the kernel axioms (WL) and
(HAP) and for the
computation of  the averaged trace of the kernel.

 Thus for this example our
main efforts in \cite{GK15} were devoted to deriving suitable kernel
estimates.

\begin{rem} {\rm
(i) For  $PW_\Omega (D_q)$ one can also derive  sufficient conditions for
sampling.  See~\cite{Pes01} for a qualitative sampling theorem and
\cite{GK15} for an explicit almost optimal sampling theorem.

(ii)  In \cite{GK15} we treated a unitarily equivalent
model of the Paley-Wiener space and studied a new concept of variable
bandwidth. In that case the density results are formulated differently
and also involve a different  geometry.   }
\end{rem}

\subsection{Sampling in locally compact groups}

Let $\cG $ be a  locally compact group with Haar
measure $d\mu = dx $. We make the following additional assumptions:

\begin{itemize}[itemindent=0cm, leftmargin=1cm, itemsep=0.2cm, parsep=0.15cm]
\item[(i)]  $\cG $ is compactly generated, i.e.,
there exists a symmetric neighborhood $U = U\inv $ of $e$
with compact closure such that
$\cG = \bigcup _{n=0} ^\infty U^n$.  The corresponding metric  on $\Gc$, the
so-called  word metric,  is defined as
\begin{align*}
d(x,y) := \min\{n \in \mathbb{N}_0: x^{-1} y \in U^n\}, \qquad x,y \in \Gc.
\end{align*}
It is clearly left-invariant, and the balls are compact sets, in particular Borel sets of finite measure.

\item[(ii)] $\cG $ has polynomial growth, i.e., there exist constants $C,D>0$
such that
\begin{equation}
  \label{eq:c88}
\mu(U^n) \leq C n^D, \qquad n \in \mathbb{N}.
\end{equation}
\end{itemize}
Under these assumptions the word metric possesses the weak annular
decay property. In fact, Tessera~\cite[Cor.~10]{tes07} showed that polynomial
growth implies the annular decay property. Thus $(\cG, d , \mu )
$ satisfies all geometric axioms. (See also \cite{comi98, br14}.)

Now let $\pi $ be an irreducible,  unitary, \emph{square-integrable}
representation on a Hilbert space $\cH $. The orthogonality relations
for square-integrable representations~\cite{Fu_LN} allow us to identify $\cH $
with a reproducing kernel Hilbert space. Precisely, fix a non-zero
$g\in \cH $ with normalization $\|g\| = 1$  and consider the map
$$
\mC: \cH \to L^2(\cG ), \quad \mC f(x) =
\langle f, \pi (x) g \rangle , \qquad x\in \cG \, .
$$
The orthogonality relations then imply that
\begin{equation}
  \label{eq:c76}
\langle \mC f, \mC h \rangle _{L^2(\cG )} = d_\pi ^{-1} \,   \langle f, h \rangle
_\cH \, ,
\end{equation}
where the  constant $d_\pi $ is the so-called formal dimension of $\pi $.
 Consequently $\mC$ is a multiple of an isometry, and we can
identify the representation space $\cH $ with the subspace $\tilde \cH
= \mC \cH $ of $L^2(\cG )$. Now
choosing $h = \pi (x) g$ for $x\in \cG $ in~\eqref{eq:c76}, we obtain that
\begin{align*}
  \mC f(x) = \langle f, \pi (x) g\rangle_{\cH} = d_\pi \langle \mC f, \mC (\pi
  (x)g)\rangle_{L^2(\cG )} = d_\pi \int _\cG \mC f(y) \overline{\langle \pi (x)g, \pi (y)g\rangle} \, dy \, .
\end{align*}
This identity says that $\tilde \cH $ is a reproducing kernel Hilbert
space with kernel
$$
k(x,y) =  d_\pi \, \overline{\langle \pi (x)g, \pi (y)g\rangle}  =
d_\pi \langle g,
\pi (y\inv x ) g\rangle \, .
$$
Consequently, $k(x,x) = d_\pi $ is constant,  and axiom $(D)$ is  satisfied
trivially. The computation of the averaged trace is a banality and yields
$$
\mathrm{tr}^{\pm} (k) = \frac{1}{\mu (B_r(x) )} \int _{B_r(x)} k(y,y)
\, d\mu (y)   = d_\pi \, .
$$

Moreover, since $x \to \langle g, \pi (x)g\rangle $ is in
$L^2(\cG )$,  the  weak localization (WL) is also  satisfied. Again,
the  homogeneous approximation property (HAP) is the least obvious property
and requires some work. Let $\mathbf{B}$ consist of all vectors $g \in
\cH $ of the form $g = \int _\cG \eta(x) \pi (x) g_0 \, d\mu (x)$ for
some $g_0 \in \cH $ and $\eta$ a compactly supported continuous function on
$\cG $. If $g\in \mathbf{B}$ and $\Lambda \subseteq \cG $ is an
arbitrary relatively separated set, then the set of reproducing kernels
$\{ \langle g,  \pi (\lambda ^{-1} \cdot)g\rangle : \lambda
\in \Lambda \}$ satisfies axiom (HAP) by an observation in~\cite{Gr_HAP}.

To formulate  Theorem~\ref{main} for this particular
example, we finally note that $\Lambda \subseteq \cG $ is a set of
sampling (set of interpolation) for $\tilde \cH $ \fif\ $\{ \pi
(\lambda )g:
\lambda \in \Lambda \}$ is a frame (Riesz sequence)  for $\cH $.
Frames of this form are often called coherent frames or discrete
subsets of coherent states.
 Theorem~\ref{main} yields the following density result for
coherent frames.

\begin{tm} \label{tm-coherent}
Let $\cG $ be a  compactly generated,  locally compact group with
polynomial growth, and let $\pi $ be an irreducible,  unitary,
\emph{square-integrable} representation on a Hilbert space $\cH $.

(i)  If $\{\pi(\lambda)g:\lambda\in\Lambda\}$ is a frame for $\cH$ for
$g\in \mathbf{B}$, then
$D^{-}(\Lambda) \geq d_\pi $.

(ii) If $\{\pi(\lambda)g:\lambda\in\Lambda\}$ is a Riesz sequence in
$\cH$, then
$D^{+}(\Lambda) \leq d_\pi $.
\end{tm}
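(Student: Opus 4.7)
The plan is to apply Theorem \ref{main} to the reproducing kernel Hilbert space $\tilde{\cH} = \mC\cH \subseteq L^2(\cG)$ obtained by transferring $\cH$ via the (rescaled) isometry $\mC f(x) = \langle f, \pi(x) g\rangle$. The discussion preceding the theorem has essentially assembled all the ingredients; the proof amounts to verifying that the axioms of Section~\ref{sec_assumptions} hold and then translating back through $\mC$. First I would dispatch the geometric axioms: compact generation equips $\cG$ with the word metric, and polynomial growth implies the annular decay property by Tessera's theorem~\cite{tes07}, which in particular yields (WAD) and (NDB). Next, the kernel
\[
k(x,y) = d_\pi \langle g, \pi(y^{-1}x)g\rangle
\]
computed from the orthogonality relations~\eqref{eq:c76} satisfies $k(x,x) = d_\pi$ identically, so (D) is trivial with equality, and consequently $\mathrm{tr}^{\pm}(k) = d_\pi$.

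For axiom (WL), I would exploit square-integrability of $\pi$: the matrix coefficient $x \mapsto \langle g, \pi(x)g\rangle$ lies in $L^2(\cG)$, and left-invariance of the Haar measure together with $k(x,y) = d_\pi \overline{\langle \pi(x)g,\pi(y)g\rangle}$ gives
\[
\int_{\cG \setminus B_r(x)} |k(x,y)|^2\, d\mu(y) = d_\pi^2 \int_{\cG \setminus B_r(e)} |\langle g, \pi(z)g\rangle|^2\, d\mu(z),
\]
which tends to $0$ as $r \to \infty$ uniformly in $x$. This handles (WL).

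The main obstacle, as the authors flag, is verifying (HAP) for part (i). This is where the hypothesis $g \in \mathbf{B}$ enters: for vectors of the form $g = \int_\cG \eta(x)\pi(x)g_0\, d\mu(x)$ with compactly supported continuous $\eta$, the result of~\cite{Gr_HAP} provides the pointwise-type estimate needed to conclude that $\{k_\lambda : \lambda \in \Lambda\}$ satisfies (HAP) whenever $\Lambda$ is relatively separated. I would cite this directly rather than reprove it. For part (ii), Remark~\ref{rem_nohap} tells us (HAP) is not needed, so the assumption $g \in \mathbf{B}$ can be dropped there, which matches the statement of the theorem.

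Finally, I would observe that under the isometry $\mC$, the set $\{\pi(\lambda)g : \lambda \in \Lambda\}$ is a frame (respectively, a Riesz sequence) in $\cH$ if and only if $\{\mC(\pi(\lambda)g)\} = \{d_\pi\, \overline{k_\lambda}\}$ (up to conjugation and the constant $d_\pi$) is a frame (Riesz sequence) in $\tilde{\cH}$, i.e., if and only if $\Lambda$ is a set of sampling (interpolation) for $\tilde{\cH}$. Applying Theorem~\ref{main} with $\mathrm{tr}^{\pm}(k) = d_\pi$ then yields $D^-(\Lambda) \geq d_\pi$ in case (i) and $D^+(\Lambda) \leq d_\pi$ in case (ii), completing the proof.
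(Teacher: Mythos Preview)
Your proposal is correct and follows essentially the same route as the paper: the discussion in Section~5.3 preceding the theorem verifies the geometric axioms via Tessera's annular decay result, computes $k(x,x)=d_\pi$ to obtain (D) and $\mathrm{tr}^{\pm}(k)=d_\pi$, deduces (WL) from square-integrability of the matrix coefficient, cites~\cite{Gr_HAP} for (HAP) when $g\in\mathbf{B}$, and then invokes Theorem~\ref{main}. Your observation that (HAP) is unnecessary for part~(ii) via Remark~\ref{rem_nohap} is also in line with the paper; the only imprecision is the identification $\mC(\pi(\lambda)g)=d_\pi\,\overline{k_\lambda}$, which should be $\mC(\pi(\lambda)g)=d_\pi^{-1}k_\lambda$, but this does not affect the frame or Riesz sequence property.
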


This result seems to be new. For square-integrable representations of
groups of polynomial growth it  provides a critical density that
separates frames from Riesz sequences.   For concrete representations, e.g., the
Schr\"odinger representation of the Heisenberg group
Theorem~\ref{tm-coherent}  has been
derived many times in the context of Gabor analysis~\cite{heil07}. For
homogeneous (nilpotent) groups it has been proved in the thesis of A.\
H\"ofler~\cite{hoefler} by using the techniques of Ramanathan and
Steger~\cite{RS95}.

Let us mention that
 the construction of coherent frames associated to irreducible
 representations was first studied  systematically in  coorbit theory, see
 \cite{gro91,FeiGr_coorbit2}. If $\Lambda \subseteq \cG $ is
 ``sufficiently dense'', then $\{\pi(\lambda)g:\lambda\in\Lambda\}$ is
 a frame for $\cH $. Theorem~\ref{tm-coherent} complements the
 existence of such frames by a critical density.

Theorem~\ref{main} also  yields several new density  results about sampling and
interpolation in reproducing kernel Hilbert spaces that are
invariant under a group action. As the full exploitation of
Theorem~\ref{main} is beyond the scope of this  section, we will come
back to it in further work.

\subsection{Complex analysis}
Finally, we deal with sampling and interpolation in weighted spaces
of analytic functions. We will partially  rederive  Lindholm's result~\cite{Lin01}  from Theorem~\ref{main}.

Let $\phi$ be a plurisubharmonic function on $\mathbb{C}^n$ which is $2$-homogeneous and $C^2$ on $\mathbb{C}^n
\setminus \{0 \}$.  We also assume that there exist $A, B > 0$ such that
\begin{equation} \label{subharm}
A \cdot \mathrm{Id}_n \leq \Big( \partial_j \bar \partial_k \phi(z)
\Big)_{j,k = 1, \dots n} \leq B \cdot \mathrm{Id}_n
\end{equation}
for all  $z \neq 0$, in the sense of positive definite matrices.
It follows  that
$A^n \leq \det (\partial_j \bar \partial_k \phi)_{jk} \leq B^n$
on $\mathbb{C}^n \setminus \{ 0 \}. $ Note that in dimension $n=1$
this condition simply means that the Laplacian $\Delta \phi = \partial
\bar \partial \phi $ is
bounded above and below from $0$.

Our main object is the  Hilbert space $\mathcal{F}^2_\phi$
of entire  functions on $\bC ^n$ defined by the norm
$ \|f \|_{\mathcal{F}^2_\phi } ^2 = \int _{\bC ^n} |f|^2 e^{-2\phi } dm $. The
standard example is the weight
$\phi (z) = |z|^2/2$ which yields the Bargmann-Fock space. This is a
reproducing kernel Hilbert space with kernel $\frac{1}{\pi}e^{z\cdot \bar
  {w}}$. Since this kernel is unbounded, we use a different
normalization to put it into the framework of Theorem~\ref{main}.

We take $X= \mathbb{C}^n$ with the usual Euclidean distance and  the
measure  $d\mu = \det (\partial_j
\bar \partial_k \phi)_{jk} dm$ where $dm$ is the Lebesgue measure.  Thus   $\mu $ is equivalent
to Lebesgue measure. Let $A^2_\phi $ consist of all functions
 of the form
\begin{equation} \label{weightedfunctions}
g=  \frac{1}{\sqrt{\det (\partial_j \bar \partial_k \phi)_{jk}}}f e^{-\phi},
\end{equation}
 where $f$ is entire, such that
\begin{equation} \label{weightedanalnorm}
\|g\|^2  = \int_{\mathbb{C}^n} |g|^2 d \mu =
 \int_{\mathbb{C}^n} |f|^2 e^{-2\phi} dm < \infty.
 \end{equation}
We observe immediately that the assumptions on the metric and the
measure required in the main theorem are satisfied.

It can be shown that  $A^2_\phi$ is a reproducing kernel Hilbert space.
We denote the kernel by $K= K_\phi$.  For the weight $\phi (z) =
|z|^2/2$, our normalization  yields the following explicit expression
for the kernel \[
K(z,w)= \frac{2^n}{\pi^n} e^{z \cdot \bar w-|z|^2/2-|w|^2/2}.
\]
It is easy to see that this kernel satisfies the axioms (D),  (WL),  and
(HAP), therefore the Seip's  necessary density
conditions~\cite{seip92} for sampling in Bargmann-Fock space follow
 (without strict inequalities) directly from
Theorem~\ref{main}.

For more general weights $\phi$ there is no  explicit formula for the
kernel, but strong estimates are known. We use    Lindholm's
estimates~\cite{Lin01}.  Since he  works with the measure $e^{-\phi } dm$ and
entire functions, we have to translate these results to our
normalization with the measure $\mu $ and functions of the form
\eqref{weightedfunctions}. Using the observation~\eqref{eq:v3}, the relation between the  kernel $B_\phi $
in~\cite{Lin01} and our kernel $K_\phi $ is given by
\[
K_{\phi}(z,w) = \frac{1}{\sqrt{\det (\partial_j \bar \partial_k \phi)_{jk}(z)\det (\partial_j \bar \partial_k \phi)_{jk}(w)}} B_{\phi}(z,w) e^{-\phi(z)-\phi(w)}.
\]
Translated to our notation, Lindholm~
\cite{Lin01} proved the following facts about $K_\phi
$:

(i)   There exist  constants $C, T>0$  depending on $A$ and $B$ in
\eqref{subharm}, such that for all $k>0$ holds the
 decay  estimate
\begin{equation} \label{offdiag}
K_{k^2 \phi}(z, w) \leq C e^{-k T |z-w|} \, .
\end{equation}

(ii) On the diagonal the kernel satisfies the limit relation
\begin{equation} \label{diagconv}
\lim _{k\to \infty } K_{k^2 \phi} (z, z) =  \frac{2^n}{\pi^n} \, ,
 \end{equation}
with uniform convergence on   $\mathbb{C}^n \setminus B_\tau(0)$ for
arbitrary $\tau >0$.

In addition,
 the $2$-homogeneity and a simple change of variables imply that
 \begin{equation} \label{changevar}
K_{k^2 \phi}(z, w) = K_{\phi}( kz, kw) \, .
\end{equation}

The axioms (WL) and (HAP) follow immediately from the off-diagonal
decay \eqref{offdiag} of the kernel $K_\phi $, likewise $K_\phi (z,z)$
is bounded.
The lower bound in Axiom (D) \eqref{eq_AD} can now be verified in the following way.
Note first that since $g= \left(\det (\partial_j \bar \partial_k \phi)_{jk}\right)^{-1/2} e^{-\phi}
\in A^2_\phi$, it follows that $K(z,z) \not= 0$, for all $z \in \mathbb{C}^n$. In addition,
by \eqref{diagconv} and
\eqref{changevar}, there exist $c, \tau, R > 0$ such that
$K_{R^2\phi } (z,z) = K(Rz, Rz) > c$ on $\mathbb{C}^n \setminus B_\tau(0)$. Then $K_\phi(z,z)=
K_\phi (R \cdot z/R, R \cdot z/R) > c$ for any $|z| > R \tau$.
By continuity
and the fact that $K(z,z) \neq 0$ for all $z$, we also have $K(z,z) > c_2$ for some $c_2>0$ for $|z|
\leq R \tau$.

Thus the geometry and the kernel satisfy all required hypotheses of
Section~\ref{sec_assumptions}. Thus Theorem~\ref{main} is
applicable and yields a critical density that separates sampling from
interpolation. It remains to compute this critical density.

\begin{lemma} \label{trace-plu}
  If the weight $\phi $ is plurisubharmonic, $2$-homogeneous, and
  satisfies \eqref{subharm}, then
  \begin{equation}
    \label{eq:d6}
    \mathrm{tr}^{+}(K_\phi )  =     \mathrm{tr}^{-} (K_\phi ) = \frac{2^n}{\pi^n} \, .
  \end{equation}
\end{lemma}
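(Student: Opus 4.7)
The plan is to first show that $K_\phi(y,y)$ converges to $2^n/\pi^n$ uniformly as $|y|\to\infty$, and then conclude that the ball averages have the same limit uniformly in the center.

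First I would use the scaling relation \eqref{changevar} together with the uniform diagonal convergence \eqref{diagconv}. Fix any $\tau \in (0,1)$. Given $w \neq 0$, write $w = |w|\cdot \hat{w}$ with $|\hat{w}|=1$. Taking $k=|w|$ and $z=\hat{w}$ in \eqref{changevar} gives
\[
K_\phi(w,w) \;=\; K_{|w|^2\phi}(\hat{w},\hat{w}).
\]
Since $|\hat{w}|=1>\tau$, the uniform convergence asserted in \eqref{diagconv} on $\mathbb{C}^n\setminus B_\tau(0)$ yields: for every $\epsilon>0$ there is $R_0=R_0(\epsilon)$ such that
\[
\bigl|K_\phi(w,w) - 2^n/\pi^n\bigr| \;<\; \epsilon \qquad \text{whenever } |w|\ge R_0.
\]
Together with continuity and the already-established lower bound $K_\phi(z,z)\ge c_2$ on $|z|\le R\tau$, this also gives a uniform bound $M := \sup_{z\in\mathbb{C}^n} K_\phi(z,z) < \infty$.

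Next I would exploit the comparability $A^n\, dm \le d\mu \le B^n\, dm$ coming from \eqref{subharm}, which gives $\mu(B_r(x)) \asymp r^{2n}$ uniformly in $x\in\mathbb{C}^n$. For any ball $B_r(x)$ split
\[
\int_{B_r(x)} K_\phi(y,y)\,d\mu(y) \;=\; \int_{B_r(x)\cap B_{R_0}(0)} K_\phi(y,y)\,d\mu(y) + \int_{B_r(x)\setminus B_{R_0}(0)} K_\phi(y,y)\,d\mu(y).
\]
The first integral is bounded by $M\,\mu(B_{R_0}(0))$, a constant independent of $x$ and $r$. On the second integral, $|K_\phi(y,y)-2^n/\pi^n|<\epsilon$ pointwise, so it equals $(2^n/\pi^n)\mu(B_r(x)\setminus B_{R_0}(0))$ up to an error of at most $\epsilon\,\mu(B_r(x))$.

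Dividing by $\mu(B_r(x))$, using that $\mu(B_{R_0}(0))/\mu(B_r(x))\to 0$ uniformly in $x$ as $r\to\infty$ (by the polynomial growth of the Lebesgue volume) and that $\mu(B_r(x)\setminus B_{R_0}(0))/\mu(B_r(x))\to 1$ uniformly in $x$, I obtain
\[
\limsup_{r\to\infty}\sup_{x\in\mathbb{C}^n} \left|\frac{1}{\mu(B_r(x))}\int_{B_r(x)} K_\phi(y,y)\,d\mu(y) - \tfrac{2^n}{\pi^n}\right| \;\le\; \epsilon.
\]
Since $\epsilon>0$ is arbitrary, the same $\sup$ tends to $0$, hence both $\mathrm{tr}^+(K_\phi)$ and $\mathrm{tr}^-(K_\phi)$ equal $2^n/\pi^n$.

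The only genuinely delicate step is the first one — extracting uniform (in direction) convergence $K_\phi(w,w)\to 2^n/\pi^n$ from Lindholm's estimates; the rest is an ``averages of a function with a uniform limit at infinity'' argument, which is painless once $\mu$ is comparable to Lebesgue measure and the sphere at infinity is a single compact set of directions.
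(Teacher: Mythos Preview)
Your argument is correct, and it is a genuinely different route from the one in the paper. The paper performs a change of variables that rescales the ball $B_r(x)$ to the unit ball $B_1(x/r)$ and simultaneously replaces $K_\phi$ by $K_{r^2\phi}$, using the $0$-homogeneity of $\det(\partial_j\bar\partial_k\phi)$; it then applies \eqref{diagconv} directly on $B_1(y)$, excising a small neighborhood of the origin when $0\in B_1(y)$. You instead first extract from \eqref{changevar} and \eqref{diagconv} the single pointwise statement $K_\phi(w,w)\to 2^n/\pi^n$ uniformly as $|w|\to\infty$, and then run the elementary ``bounded function with a uniform limit at infinity has the same limit of ball averages'' argument, which only needs $\mu(B_r(x))\asymp r^{2n}$ uniformly in $x$. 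Your approach is somewhat cleaner because it separates the complex-analytic input (a one-line consequence of Lindholm's estimates) from a completely soft averaging step; the paper's approach keeps the scaling inside the integral and so must track the rescaled measure and deal with the origin inside the unit ball more carefully.

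One small expository point: your sentence justifying the uniform bound $M$ mentions the lower bound $K_\phi(z,z)\ge c_2$, which is irrelevant to bounding $K_\phi$ from above. The upper bound is already established before the lemma from the off-diagonal estimate \eqref{offdiag} (take $z=w$, $k=1$), so you can simply cite that rather than invoking continuity near the origin, which in any case is not obvious since $\phi$ is only assumed $C^2$ away from $0$.
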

\begin{proof}
We use the homogeneity \eqref{changevar} and
$(\partial_j \bar \partial_k \phi)(rz) = \partial_j
\bar \partial_k \phi(z)$ for all $r >0$. Then
\begin{align} \label{weightedanaltrace}
\sup_{x \in \mathbb{C}^n}  & \frac{1}{ \int_{B_r(x)}\det (\partial_j
  \bar \partial_k \phi)_{jk} dm} \int_{B_r(x)} K_\phi(z,z) d \mu (z)
\\
&=  \sup_{x \in \mathbb{C}^n}
\frac{1}{ \int_{B_1(x/r)}\det (\partial_j \bar \partial_k \phi)_{jk}
  dm}\int_{B_1(x/r)} K_{r^2\phi} (w, w) d \mu(w) \notag \\
& = \sup_{y \in \mathbb{C}^n}  \frac{1}{ \int_{B_1(y)}\det (\partial_j
  \bar \partial_k \phi)_{jk} dm}\int_{B_1(y)} K_{r^2\phi} (w, w)
d\mu(w). \notag
\end{align}
Now,  we use the fact that $K_{r^2 \phi}(w,w)$ converges to $\frac{2^n}{\pi^n}$
uniformly outside any ball $B_\tau(0)$, $\tau >0$. If $B_1(y)$
contains the origin, we remove a small neighborhood of $0$, otherwise
we use \eqref{diagconv} directly.   Given $\epsilon >0$, it follows that
\[
\bigg| \sup_{x \in \mathbb{C}^n} \frac{1}{ \int_{B_r(x)}\det
  (\partial_j \bar \partial_k \phi)_{jk} dm} \int_{B_r(x)} K_{r^2
  \phi}(z,z) d \mu(z) -  \frac{2^n}{\pi^n}   \bigg| \\
\leq \epsilon + \mathrm{o}(1)
\]
as $r \to \infty$. Therefore,
\[
\limsup_{r \to \infty} \bigg| \sup_{x \in \mathbb{C}^n} \frac{1}{
  \int_{B_r(x)}\det (\partial_j \bar \partial_k \phi)_{jk} dm}
\int_{B_r(x)} K_{\phi}(z,z) dm(z) -\frac{2^n}{\pi^n}   \bigg|  \leq
\epsilon \\
\]
for all $\epsilon >0$, which means that
\[
\mathrm{tr}^+(K_\phi ) = \lim_{r \to \infty}  \sup_{x \in \mathbb{C}^n} \frac{1}{ \int_{B_r(x)}\det (\partial_j \bar
\partial_k \phi)_{jk} dm} \int_{B_r(x)} K_\phi(z,z) dm(z)  =   \frac{2^n}{\pi^n}.
\]
Likewise $\mathrm{tr}^- (K_\phi ) = \frac{2^n}{\pi^n} $.
  \end{proof}

Theorem~\ref{main} now implies Lindholm's result~\cite{Lin01}.

\begin{tm}
Assume that $\phi $ is plurisubharmonic, $2$-homogeneous, and satisfies
\eqref{subharm}.

\begin{itemize}[itemindent=0cm, leftmargin=1cm, itemsep=0.2cm, parsep=0cm]
\item[(i)] If $\Lambda \subseteq \bC ^d$ is a set of sampling for $A_\phi
^2$, then $D^- (\Lambda ) \geq \frac{2^n}{\pi^n}$.

\item[(ii)] If $\Lambda \subseteq \bC ^d$ is a set of interpolation for $A_\phi
^2$, then $D^+ (\Lambda ) \leq \frac{2^n}{\pi^n}$.
\end{itemize}
\end{tm}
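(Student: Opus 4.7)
The plan is to invoke Theorem~\ref{main}, so essentially all the work goes into checking that the hypotheses of Section~\ref{sec_assumptions} apply to the pair $(X,\cH) = (\mathbb{C}^n, A^2_\phi)$ and then computing the averaged trace of the reproducing kernel $K_\phi$.

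First I would settle the geometric axioms. Equip $X = \mathbb{C}^n$ with the Euclidean metric and the measure $d\mu = \det(\partial_j \bar\partial_k\phi)_{jk}\, dm$. Inequality \eqref{subharm} ensures that $\mu$ is comparable to Lebesgue measure, so balls are non-degenerate and the weak annular decay property holds (it reduces to the trivial Euclidean fact). Then I would verify the kernel axioms. The upper bound in Axiom (D), Axiom (WL), and Axiom (HAP) all follow at once from Lindholm's off-diagonal exponential decay \eqref{offdiag}: indeed, $\abs{K_\phi(z,w)} \leq C e^{-T|z-w|}$ dominates any polynomial tail, so $\int_{X \setminus B_r(x)} |K_\phi(x,y)|^2 d\mu(y)$ and $\sum_{\lambda \in \Lambda \setminus B_r(x)} |K_\phi(x,\lambda)|^2$ (the latter for any relatively separated $\Lambda$) both tend to $0$ uniformly in $x$. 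The lower bound in Axiom (D) is the only slightly delicate point: one combines the diagonal limit \eqref{diagconv}, which gives $K_\phi(z,z) > c$ for $|z|$ sufficiently large via the rescaling \eqref{changevar}, with the fact that $K_\phi(z,z) \neq 0$ everywhere (a testimony provided by the explicit function $g = \det(\partial_j \bar\partial_k\phi)_{jk}^{-1/2} e^{-\phi} \in A^2_\phi$) and a continuity/compactness argument on the remaining bounded region.

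Next I would compute $\operatorname{tr}^\pm(K_\phi) = 2^n/\pi^n$ using Lemma~\ref{trace-plu}. The key trick is the homogeneity $K_{r^2\phi}(w,w) = K_\phi(rw,rw)$ and the invariance $(\partial_j \bar\partial_k\phi)(rz) = (\partial_j \bar\partial_k\phi)(z)$, which together allow one to rescale $B_r(x)$ to a ball of radius~$1$ about $x/r$ at the price of replacing $\phi$ by $r^2 \phi$. As $r \to \infty$ the diagonal values $K_{r^2\phi}(w,w)$ converge uniformly to $2^n/\pi^n$ outside any fixed neighbourhood of $0$, and the (fixed) neighbourhood of $0$ gives vanishing contribution to the average, yielding the value of both upper and lower trace.

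With all axioms in place and the trace value computed, I would close by quoting Theorem~\ref{main}: the lower bound for sampling sets becomes $D^-(\Lambda) \geq \operatorname{tr}^-(K_\phi) = 2^n/\pi^n$, and the upper bound for interpolation sets becomes $D^+(\Lambda) \leq \operatorname{tr}^+(K_\phi) = 2^n/\pi^n$. The hardest step in this plan is really the bookkeeping around the lower bound in Axiom (D) and the trace computation, both of which hinge on exploiting the $2$-homogeneity of $\phi$ to transfer Lindholm's asymptotic-in-$k$ estimates into pointwise, uniform information on $K_\phi$ itself; everything else is a direct citation of Lindholm's kernel bounds or a routine Euclidean volume estimate.
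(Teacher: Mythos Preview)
Your proposal is correct and follows essentially the same route as the paper: verify the geometric axioms via the equivalence of $\mu$ with Lebesgue measure, derive (WL), (HAP), and the upper bound in (D) from Lindholm's exponential off-diagonal estimate \eqref{offdiag}, handle the lower bound in (D) by combining the nonvanishing of $K_\phi(z,z)$ (witnessed by the explicit function $g=\det(\partial_j\bar\partial_k\phi)_{jk}^{-1/2}e^{-\phi}$) with the rescaling \eqref{changevar} and the diagonal asymptotic \eqref{diagconv}, compute $\mathrm{tr}^\pm(K_\phi)=2^n/\pi^n$ via the homogeneity rescaling argument of Lemma~\ref{trace-plu}, and conclude by Theorem~\ref{main}. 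There is no substantive difference in strategy or in the identification of the delicate steps.
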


\begin{rem}
{\normalfont
For generalized Fock spaces in one complex variable, Ortega-Cerd\`{a} and Seip
\cite{ortega1998beurling} and Marco, Massaneda, Ortega-Cerd\`{a}
\cite{marco2003interpolating}  proved a density  theorem for
non-homogeneous weights as well. Although Theorem~\ref{main} applies,
we are (not yet) able to recover their explicit result. This would
require to derive a version of Lemma~\ref{trace-plu} for non-homogeneous
doubling weights.
}
\end{rem}

\subsection{Density of Abstract Frames}
Finally, we note certain connections with the density theory for abstract frames \cite{bacahela06,BCHL06,bala07}.

Let $\cH $ be a separable Hilbert space, $(X,d)$ a countable
metric space with counting measure $\mu $, and
$\mathcal{F}=\{ f_x : x\in X\}$ a frame for $\cH$, i.e.,
there exist $A,B>0$ such that
\begin{equation}
  \label{eq:d2}
A \|f\|^2 \leq \sum _{x\in X} |\langle f, f_x \rangle |^2 \leq B
\|f\|^2 \qquad \forall f \in \cH \, .
\end{equation}
Using the coefficient operator $\mC: \cH \to \ell ^2(X)$
\begin{equation}
  \label{eq:d1}
  \mC f(x) = \langle f, f_x\rangle   \qquad x\in X \, ,
\end{equation}
we can identify the abstract Hilbert space $\cH $ with the subspace of
functions $\mC f $ of $\ell ^2(X)$. By the frame inequalities
\eqref{eq:d2} $\mC$ is one-to-one with closed range in $\ell ^2(X)$,
which we call $\widetilde \cH  = \mC \cH \subseteq \ell ^2(X)$.

 Let
$\{\tilde f_x : x\in X\}$ be the (canonical) dual frame of $\cH $,
then every $f\in \cH $ possesses the frame expansion
$ f = \sum _{y\in X} \langle f, f_y \rangle \tilde f_y$, and
consequently
$$
\mC f(x)  = \sum _{y\in X} \langle f, f_y \rangle \langle \tilde f_y ,
f_x\rangle  = \sum _{y\in X} \mC f( y)  \langle \tilde f_y ,
f_x\rangle  \, ,
$$
This means that $\tilde \cH $ is a reproducing kernel subspace of $\ell^2(X)$ with kernel
\begin{equation*}
    k(x,y) = \langle \tilde f_y , f_x \rangle.
\end{equation*}
The two properties (WL) and (HAP) for the pair $(X,\widetilde \cH)$ are equivalent to what
in \cite{bacahela06} is called $\ell^2$-\emph{localization} of the frames
$\mathcal{F}$ and $\mathcal{\widetilde F}$. Furthermore, the (lower) averaged
trace of this kernel is
$$
\mathrm{tr}^- (k) = \liminf _{r\to \infty } \inf _{x\in X} \frac{1}{\# B_r(x)} \sum _{y\in B_r(x)} \langle
    \tilde f_y, f_y\rangle \, .
$$
This quantity correspond exactly to the (lower)
\emph{frame measure} of $\mathcal{F}$ in
\cite{bacahela06,bala07}.

Besides these technical similarities, Theorem \ref{main} is not formally comparable to the
results in \cite{bacahela06,BCHL06,bala07}.  The theory of Balan,
Casazza, Heil, and Landau  in \cite{bacahela06,BCHL06,bala07} compares two abstract
frames, and derives an equality relating density and measure. By contrast,
Theorem \ref{main} compares a frame of reproducing kernels to a possibly continuous resolution of the
identity.

\subsection{More on Axiom (WAD) ---  The
standard Bergman space on the upper-half plane}
\label{exa}
Let $X=\{z \in \mathbb{C}: \mathrm{Im}(z)>0\}$ with the hyperbolic
distance
\begin{align*}
d(z,w) = 2 \tanh^{-1} \left( \frac{\abs{z-w}}{\abs{z - \overline{w}}}\right),
\end{align*}
and measure $d\mu(z) = \frac{1}{\pi} \mathrm{Im}(z)^{-2} dA(z)$, where
$dA(z)$ denotes the Lebesgue measure. We consider the
RKHS of functions
\begin{align*}
\cH = \left\{ \mathrm{Im}(z) f(z), \mbox{ with }f: X \to \mathbb{C}
\mbox{ analytic } \right\} \cap L^2(X, \mu).
\end{align*}
One can readily verify that the measure
of $B_R(0)$ grows exponentially in $R$ and that the weak annular decay property does not hold. Hence, Theorem \ref{main} is not applicable in this setting. Nevertheless, with the appropriate notion of density introduced by Seip \cite{seip93},
necessary and sufficient conditions for sampling and interpolation do hold for $\cH$.


\begin{thebibliography}{10}

\bibitem{Abreu10}
L.~D. Abreu.
\newblock Sampling and interpolation in {B}argmann-{F}ock spaces of
              polyanalytic functions.
\newblock {\em Appl. Comput. Harmon. Anal.}, 29(3):287--302, 2010.

\bibitem{AB12}
L.~D. Abreu and A.~S. Bandeira.
\newblock Landau's necessary density conditions for the {H}ankel transform.
\newblock {\em J. Funct. Anal.}, 262(4):1845--1866, 2012.

\bibitem{AlGr}
A.~Aldroubi and K.~Gr{\"o}chenig.
\newblock Nonuniform sampling and reconstruction in shift-invariant spaces.
\newblock {\em SIAM Rev.}, 43(4):585--620 (electronic), 2001.

\bibitem{bacahela06}
R.~Balan, P.~G. Casazza, C.~Heil, and Z.~Landau.
\newblock Density, overcompleteness, and localization of frames. {I}. {T}heory.
\newblock {\em J. Fourier Anal. Appl.}, 12(2):105--143, 2006.

\bibitem{BCHL06}
R.~Balan, P.~G. Casazza, C.~Heil, and Z.~Landau.
\newblock Density, overcompleteness, and localization of frames. {II}. {G}abor
  systems.
\newblock {\em J. Fourier Anal. Appl.}, 12(3):309--344, 2006.

\bibitem{bala07}
R.~Balan and Z.~Landau.
\newblock Measure functions for frames.
\newblock {\em J. Funct. Anal.}, 252(2):630--676, 2007.

\bibitem{beurling66}
A.~Beurling.
\newblock Local harmonic analysis with some applications to differential
  operators.
\newblock In {\em Some Recent Advances in the Basic Sciences, Vol. 1 (Proc.
  Annual Sci. Conf., Belfer Grad. School Sci., Yeshiva Univ., New York,
  1962--1964)}, pages 109--125. Belfer Graduate School of Science, Yeshiva
  Univ., New York, 1966.

\bibitem{br14}
E.~Breuillard.
\newblock Geometry of locally compact groups of polynomial growth and shape of
  large balls.
\newblock {\em Groups Geom. Dyn.}, 8(3):669--732, 2014.

\bibitem{Buckley99}
S.~M. Buckley.
\newblock Is the maximal function of a {L}ipschitz function continuous?
\newblock {\em Ann. Acad. Sci. Fenn. Math.}, 24(2):519--528, 1999.

\bibitem{chha13}
H.~{C}ho and S.~{H}an.
\newblock {E}xponentially weighted $l^p$-estimates for $\overline\partial$ on
the unit disc.
\newblock {\em J. Math. Anal. Appl.}, 404(1):129--134, 2013.

\bibitem{comi98}
T.~H. Colding and W.~P. Minicozzi, II.
\newblock Liouville theorems for harmonic sections and applications.
\newblock {\em Comm. Pure Appl. Math.}, 51(2):113--138, 1998.

\bibitem{Coulhon95}
T.~Coulhon and L.~Saloff-Coste.
\newblock Vari\'et\'es riemanniennes isom\'etriques \`a l'infini.
\newblock {\em Rev. Mat. Iberoamericana}, 11(3):687--726, 1995.

\bibitem{daubechies90}
I.~Daubechies.
\newblock The wavelet transform, time-frequency localization and signal
  analysis.
\newblock {\em IEEE Trans. Inform. Theory}, 36(5):961--1005, 1990.

\bibitem{DS52}
R.~J. Duffin and A.~C. Schaeffer.
\newblock A class of nonharmonic {F}ourier series.
\newblock {\em Trans. Amer. Math. Soc.}, 72:341--366, 1952.

\bibitem{FeiGr_euclidean}
H.~G. Feichtinger and K.~Gr{\"o}chenig.
\newblock Theory and practice of irregular sampling.
\newblock In {\em Wavelets: mathematics and applications}, Stud. Adv. Math.,
  pages 305--363. CRC, Boca Raton, FL, 1994.

\bibitem{FeiGr_coorbit2}
H.~G. Feichtinger and K.~H. Gr{\"o}chenig.
\newblock Banach spaces related to integrable group representations and their
  atomic decompositions. {I}.
\newblock {\em J. Funct. Anal.}, 86(2):307--340, 1989.

\bibitem{FFP16}
H.~ G.~ Feichtinger, H.~ F{\"u}hr, I.~Z.~ Pesenson.
\newblock Geometric space-frequency analysis on manifolds.
\newblock Preprint.  arXiv:1512.08668.

\bibitem{Fu_LN}
H.~F{\"u}hr.
\newblock {\em Abstract harmonic analysis of continuous wavelet transforms},
  volume 1863 of {\em Lecture Notes in Mathematics}.
\newblock Springer-Verlag, Berlin, 2005.

\bibitem{Fu08}
H.~F{\"u}hr.
\newblock Simultaneous estimates for vector-valued {G}abor frames of {H}ermite
  functions.
\newblock {\em Adv. Comput. Math.}, 29(4):357--373, 2008.

\bibitem{fugr07}
H.~F{\"u}hr and K.~Gr{\"o}chenig.
\newblock Sampling theorems on locally compact groups from oscillation
  estimates.
\newblock {\em Math. Z.}, 255(1):177--194, 2007.

\bibitem{Grah87}
C.~C. Graham.
\newblock A uniform boundedness principle for compact sets and the decay of
  {F}ourier transforms.
\newblock {\em Math. Z.}, 196(3):331--342, 1987.

\bibitem{gro91}
K.~Gr{\"o}chenig.
\newblock Describing functions: atomic decompositions versus frames.
\newblock {\em Monatsh. Math.}, 112(1):1--42, 1991.

\bibitem{Gr_HAP}
K.~Gr{\"o}chenig.
\newblock The homogeneous approximation property and the comparison theorem for
  coherent frames.
\newblock {\em Sampl. Theory Signal Image Process.}, 7(3):271--279, 2008.

\bibitem{GK15}
K.~{Gr{\"o}chenig} and A.~{Klotz}.
\newblock {What is Variable Bandwidth?}
\newblock {\em Comm. Pure Appl. Math.}. DOI: 10.1002/cpa.21694.

\bibitem{GR96}
K.~Gr{\"o}chenig and H.~Razafinjatovo.
\newblock On {L}andau's necessary density conditions for sampling and
  interpolation of band-limited functions.
\newblock {\em J. London Math. Soc. (2)}, 54(3):557--565, 1996.

\bibitem{heil07}
C.~Heil.
\newblock History and evolution of the density theorem for {G}abor frames.
\newblock {\em J. Fourier Anal. Appl.}, 13(2):113--166, 2007.

\bibitem{hoefler}
A.~H{\"o}fler.
\newblock {\em Necessary density conditions for frames on homogeneous groups}.
\newblock PhD thesis, Universit\"at Wien, 2014.

\bibitem{IK06}
A.~Iosevich and M.~N. Kolountzakis.
\newblock A {W}eyl type formula for {F}ourier spectra and frames.
\newblock {\em Proc. Amer. Math. Soc.}, 134(11):3267--3274 (electronic), 2006.

\bibitem{KL96}
M.~N. Kolountzakis and J.~C. Lagarias.
\newblock Structure of tilings of the line by a function.
\newblock {\em Duke Math. J.}, 82(3):653--678, 1996.

\bibitem{Ku07}
G.~Kutyniok.
\newblock {\em Affine density in wavelet analysis}, volume 1914 of {\em Lecture
  Notes in Mathematics}.
\newblock Springer, Berlin, 2007.

\bibitem{La}
H.~J. Landau.
\newblock Necessary density conditions for sampling and interpolation of
  certain entire functions.
\newblock {\em Acta Math.}, 117:37--52, 1967.

\bibitem{Lin01}
N.~Lindholm.
\newblock Sampling in weighted {$L^p$} spaces of entire functions in {$\mathbb{C}^n$} and estimates of the {B}ergman
kernel.
\newblock {\em J. Funct. Anal.}, 182(2):390--426, 2001.

\bibitem{lyub92}
Y.~I. Lyubarski{\u\i}.
\newblock Frames in the {B}argmann space of entire functions.
\newblock In {\em Entire and subharmonic functions}, pages 167--180. Amer.
  Math. Soc., Providence, RI, 1992.


\bibitem{LOC14}
Y.~Lyubarskii and J.~Ortega-Cerd\`a.
\newblock Bandlimited {L}ipschitz functions.
\newblock {\em Appl. Comput. Harmon. Anal.}, 37(2):307--324, 2014.

\bibitem{marco2003interpolating}
N.~Marco, X.~Massaneda, and J.~Ortega-Cerd{\`a}.
\newblock Interpolating and sampling sequences for entire functions.
\newblock {\em Geometric \& Functional Analysis GAFA}, 13(4):862--914, 2003.

\bibitem{NO12}
S.~Nitzan and A.~Olevskii.
\newblock Revisiting {L}andau's density theorems for {P}aley-{W}iener spaces.
\newblock {\em C. R. Math. Acad. Sci. Paris}, 350(9-10):509--512, 2012.

\bibitem{ortega1998beurling}
J.~Ortega-Cerd{\`a} and K.~Seip.
\newblock Beurling-type density theorems for weighted l p spaces of entire
  functions.
\newblock {\em J. Anal. Math.}, 75(1):247--266, 1998.

\bibitem{Pe98}
I.~Pesenson.
\newblock Sampling of {P}aley-{W}iener functions on stratified groups.
\newblock {\em J. Fourier Anal. Appl.}, 4(3):271--281, 1998.

\bibitem{Pes01}
I.~Pesenson.
\newblock Sampling of band-limited vectors.
\newblock {\em J. Fourier Anal. Appl.}, 7(1):93--100, 2001.

\bibitem{Peza09}
I.~Pesenson and A.~I. Zayed.
\newblock Paley-{W}iener subspace of vectors in a {H}ilbert space with
  applications to integral transforms.
\newblock {\em J. Math. Anal. Appl.}, 353(2):566--582, 2009.

\bibitem{petrushev15}
G.~Kerkyacharian and P.~Petrushev.
\newblock Heat kernel based decomposition of spaces of distributions in the
  framework of {D}irichlet spaces.
\newblock {\em Trans. Amer. Math. Soc.}, 367(1):121--189, 2015.

\bibitem{RS95}
J.~Ramanathan and T.~Steger.
\newblock Incompleteness of sparse coherent states.
\newblock {\em Appl. Comput. Harmon. Anal.}, 2(2):148--153, 1995.

\bibitem{schuster00}
A.~P. Schuster.
\newblock On {S}eip's description of sampling sequences for {B}ergman spaces.
\newblock {\em Complex Variables Theory Appl.}, 42(4):347--367, 2000.

\bibitem{seip92}
K.~Seip.
\newblock Density theorems for sampling and interpolation in the
  {B}argmann-{F}ock space. {I}.
\newblock {\em J. Reine Angew. Math.}, 429:91--106, 1992.

\bibitem{seip93}
K.~Seip.
\newblock Beurling type density theorems in the unit disk.
\newblock {\em Invent. Math.}, 113(1):21--39, 1993.

\bibitem{seip04}
K.~Seip.
\newblock {\em Interpolation and sampling in spaces of analytic functions},
  volume~33 of {\em University Lecture Series}.
\newblock American Mathematical Society, Providence, RI, 2004.

\bibitem{seip-wallsten}
K.~Seip and R.~Wallst{\'e}n.
\newblock Density theorems for sampling and interpolation in the
  {B}argmann-{F}ock space. {I}{I}.
\newblock {\em J. Reine Angew. Math.}, 429:107--113, 1992.

\bibitem{tes07}
R.~Tessera.
\newblock Volume of spheres in doubling metric measured spaces and in groups of
  polynomial growth.
\newblock {\em Bull. Soc. Math. France}, 135(1):47--64, 2007.

\bibitem{Tessera08}
R.~Tessera.
\newblock Large scale {S}obolev inequalities on metric measure spaces and
  applications.
\newblock {\em Rev. Mat. Iberoam.}, 24(3):825--864, 2008.

\bibitem{triebel83}
H.~Triebel.
\newblock {\em Theory of function spaces}.
\newblock Birkh\"auser Verlag, Basel, 1983.

\bibitem{unser00}
M.~Unser.
\newblock Sampling - 50 Years after Shannon.
\newblock {\em Proc. IEEE}, 88 (4):569--587, 2000.

\end{thebibliography}
\end{document}